\def\today{\ifcase \month \or
   January \or February \or March \or April \or
   May \or June \or July \or August \or
   September \or October \or November \or December \fi
   \space\number\day , \number\year}
  \newcommand\@dotsep{4.5}
  \def\@tocline#1#2#3#4#5#6#7{\relax
     \ifnum #1>\c@tocdepth 
     \else
     \par \addpenalty\@secpenalty\addvspace{#2}%
     \begingroup \hyphenpenalty\@M
     \@ifempty{#4}{%
     \@tempdima\csname r@tocindent\number#1\endcsname\relax
        }{%
         \@tempdima#4\relax
           }%
      \parindent\z@ \leftskip#3\relax \advance\leftskip\@tempdima\relax
      \rightskip\@pnumwidth plus1em \parfillskip-\@pnumwidth
       #5\leavevmode\hskip-\@tempdima #6\relax
       \leaders\hbox{$\m@th
       \mkern \@dotsep mu\hbox{.}\mkern \@dotsep mu$}\hfill
       \hbox to\@pnumwidth{\@tocpagenum{#7}}\par
       \nobreak
        \endgroup
         \fi}
\begin{document}


\makeatletter
\@addtoreset{figure}{section}
\def\thefigure{\thesection.\@arabic\c@figure}
\def\fps@figure{h,t}
\@addtoreset{table}{bsection}

\def\thetable{\thesection.\@arabic\c@table}
\def\fps@table{h, t}
\@addtoreset{equation}{section}
\def\theequation{
\arabic{equation}}
\makeatother

\newcommand{\bfi}{\bfseries\itshape}

\newtheorem{theorem}{Theorem}
\newtheorem{acknowledgment}[theorem]{Acknowledgment}
\newtheorem{corollary}[theorem]{Corollary}
\newtheorem{definition}[theorem]{Definition}
\newtheorem{example}[theorem]{Example}
\newtheorem{lemma}[theorem]{Lemma}
\newtheorem{notation}[theorem]{Notation}
\newtheorem{problem}[theorem]{Problem}
\newtheorem{proposition}[theorem]{Proposition}
\newtheorem{question}[theorem]{Question}
\newtheorem{remark}[theorem]{Remark}
\newtheorem{setting}[theorem]{Setting}

\numberwithin{theorem}{section}
\numberwithin{equation}{section}

\renewcommand{\1}{{\bf 1}}
\newcommand{\Ad}{{\rm Ad}}
\newcommand{\Aut}{{\rm Aut}\,}
\newcommand{\ad}{{\rm ad}}
\newcommand{\alg}{{\rm alg}}
\newcommand{\botimes}{\bar{\otimes}}
\newcommand{\Ci}{{\mathcal C}^\infty}
\newcommand{\de}{{\rm d}}
\newcommand{\ee}{{\rm e}}
\newcommand{\End}{{\rm End}\,}
\newcommand{\Fl}{{\rm Fl}}
\newcommand{\id}{{\rm id}}
\newcommand{\ie}{{\rm i}}
\newcommand{\jump}{{\rm jump}}
\newcommand{\GL}{{\rm GL}}
\newcommand{\Gr}{{\rm Gr}}
\newcommand{\Hom}{{\rm Hom}\,}
\newcommand{\Ind}{{\rm Ind}}
\newcommand{\JH}{{\rm JH}}
\newcommand{\Ker}{{\rm Ker}\,}
\newcommand{\pr}{{\rm pr}}
\newcommand{\Ran}{{\rm Ran}\,}
\newcommand{\rank}{{\rm rank}\,}
\renewcommand{\Re}{{\rm Re}\,}
\newcommand{\sa}{{\rm sa}}
\newcommand{\spa}{{\rm span}\,}
\newcommand{\tsr}{{\rm tsr}}
\newcommand{\Tr}{{\rm Tr}\,}

\newcommand{\CC}{{\mathbb C}}
\newcommand{\HH}{{\mathbb H}}
\newcommand{\RR}{{\mathbb R}}
\newcommand{\TT}{{\mathbb T}}

\newcommand{\Ac}{{\mathcal A}}
\newcommand{\Bc}{{\mathcal B}}
\newcommand{\Cc}{{\mathcal C}}
\newcommand{\Hc}{{\mathcal H}}
\newcommand{\Ic}{{\mathcal I}}
\newcommand{\Jc}{{\mathcal J}}
\newcommand{\Kc}{{\mathcal K}}
\newcommand{\Lc}{{\mathcal L}}
\renewcommand{\Mc}{{\mathcal M}}
\newcommand{\Nc}{{\mathcal N}}
\newcommand{\Oc}{{\mathcal O}}
\newcommand{\Pc}{{\mathcal P}}
\newcommand{\Tc}{{\mathcal T}}
\newcommand{\Vc}{{\mathcal V}}
\newcommand{\Xc}{{\mathcal X}}
\newcommand{\Yc}{{\mathcal Y}}
\newcommand{\Wc}{{\mathcal W}}

\newcommand{\Fg}{{\mathfrak F}}
\newcommand{\Gg}{{\mathfrak G}}
\newcommand{\Ug}{{\mathfrak U}}
\newcommand{\Vg}{{\mathfrak V}}
\newcommand{\Wg}{{\mathfrak W}}

\renewcommand{\gg}{{\mathfrak g}}
\newcommand{\hg}{{\mathfrak h}}
\newcommand{\kg}{{\mathfrak k}}
\renewcommand{\ng}{{\mathfrak n}}

\newcommand{\ZZ}{\mathbb Z}
\newcommand{\NN}{\mathbb N}

\makeatletter
\title[Nonlinear oblique projections]{Nonlinear oblique projections}
\author{Ingrid Belti\c t\u a and Daniel Belti\c t\u a}
\address{Institute of Mathematics ``Simion Stoilow'' 
of the Romanian Academy, 
P.O. Box 1-764, Bucharest, Romania}
\email{ingrid.beltita@gmail.com, Ingrid.Beltita@imar.ro}
\email{beltita@gmail.com, Daniel.Beltita@imar.ro}
\thanks{This work was supported by a grant of the Romanian National Authority for Scientific Research and
Innovation, CNCS--UEFISCDI, project number PN-II-RU-TE-2014-4-0370}
\date{\today}
\makeatother

\begin{abstract} 
We construct nonlinear oblique projections along subalgebras of  nilpotent Lie algebras in terms of the Baker-Campbell-Hausdorff multiplication. 
We prove that these nonlinear projections are real analytic on every Schubert cell of the Grassmann manifold whose points are the subalgebras of the nilpotent Lie algebra under consideration. 
\\
\textit{2010 MSC:} Primary 17B30; Secondary 15A09
\\
\textit{Keywords:}  Moore-Penrose inverse; oblique projection; Grassmann manifold; nilpotent Lie algebra
\end{abstract}

\maketitle


\section{Introduction}

The Grassmann manifold of linear subspaces of a finite-dimensional vector space plays an important role 
in linear algebra, operator theory, and differential geometry. 
In particular, the study of oblique projections and operator ranges can be transparently conducted 
from the perspective of that manifold and of its infinite-dimensional versions, as one can see for instance in \cite{ACrM15},   \cite{ArCrGo13}, \cite{AnCrMb13}, and  \cite{CrM10}. 

The oblique projections are linear projection operators 
defined by decompositions of a vector space into a direct sum of two subspaces. 
In this paper we study a nonlinear version of the oblique projections, 
replacing the commutative vector addition of a vector space $\Ug$ by a more general noncommutative group structure defined by a polynomial map $\Ug\times\Ug\to\Ug$, $(X,Y)\mapsto X\cdot Y$ satisfying $(tX)\cdot(sX)=(t+s)X$ for all $t,s\in\RR$ and $X\in\Ug$. 
As we will recall below in Section~\ref{nilp}, such a group structure turns $\Ug$ into a nilpotent Lie group and coincides with 
the Baker-Campbell-Hausdorff multiplication defined by a uniquely determined Lie bracket on~$\Ug$.  
(This construction makes sense if $\Ug$ is a Banach space and it then leads to some interesting problems, as discussed for instance in \cite{BB15a}.)
In this setting, the role of the Grassmann manifold is held by the set $\Gr^{\alg}(\Ug)$ of all subalgebras, rather than the linear subspaces of~$\Ug$. 
The natural nonlinear oblique projections along subalgebras defined in this way has been proved to be an important tool in representation theory of Lie groups (see \cite{CG90} and \cite{FuLu15}). 

In the noncommutative framework outlined above, we study these generalized oblique projections along subalgebras of a nilpotent Lie algebra, 
and we establish their analyticity properties on suitable Schubert cells 
 (Theorem~\ref{of}). 
This is our main result here, and it was motivated by our recent research on the structure of $C^*$-algebras of nilpotent Lie groups. 
(See \cite{BB17} and \cite{BBL17}.) 
We will briefly explain this motivation toward the end of the present paper, which is organized as follows: 
In Section~\ref{lin} we discuss analyticity of linear oblique projections, using the Moore-Penrose inverse. 
Then, in Section~\ref{cells} we establish some properties of the Schubert stratification of the Grassmann manifold, for later use. 
In Section~\ref{nilp} we briefly recall nilpotent Lie groups and algebras, and finally, in Section~\ref{nonlin} we obtain our main result on analyticity of nonlinear oblique projections.

\subsection*{General notation} 
For any finite-dimensional real vector space $\Ug$ we denote by $\Bc(\Ug)$ its unital associative algebra of linear operators on~$\Ug$. 
If $\Ug$ is endowed with a scalar product, and thus $\Ug$ is a finite-dimensional real Hilbert space, we denote $\Pc(\Ug):=\{P\in\Bc(\Ug)\mid P=P^2=P^*\}$, 
which is well known to be a compact real analytic submanifold of the real vector space $\Bc(\Ug)$. 
For every linear subspace $\Wc\subseteq\Ug$ we denote by $P_{\Wg}\in\Pc(\Ug)$ the orthogonal projection of $\Ug$ onto~$\Wg$. 
The \emph{Grassmann manifold} of $\Ug$ is the set $\Gr(\Ug)$ of all linear subspaces of $\Ug$. 
The map $\Gr(\Ug)\to\Pc(\Ug)$, $\Wg\mapsto P_{\Wg}$, is a bijection, 
and we endow $\Gr(\Ug)$ with the structure of a real analytic manifold that makes that bijection into a real analytic diffeomorphism. 

For any integer $n\ge 1$ we denote by $\Pc_n$ the set of all subsets of $\{1,\dots,n\}$. 
For every $e\in\Pc_n$ we denote $\complement e:=\{1,\dots,n\}\setminus e$, 
and we write $e=\{j_1<\cdots<j_d\}$ if $e=\{j_1,\dots,j_d\}$ with  $j_1<\cdots<j_d$.

\section{Analyticity of linear oblique projections}\label{lin}

If $\Ug$ is a finite-dimensional real vector space with two subspaces $\Ug_1,\Ug_2\subseteq\Ug$ with $\Ug=\Ug_1\dotplus\Ug_2$, 
then the corresponding linear \emph{oblique projection of $\Ug$ onto $\Ug_2$ along $\Ug_1$} 
is the linear operator $E\colon \Ug\to\Ug$ defined by the conditions $\Ker E=\Ug_1$ and $Ew=w$ for every $w\in\Ug_2$. 

The next theorem gives the analytic dependence of the above operator $E$ with respect to $\Ug_1$ in a suitable open subset of the Grassmann manifold. 

\begin{theorem}\label{lin0}
	Let $\Ug$  be a finite-dimensional real vector space with a fixed linear subspace $\Ug_0\subseteq\Ug$. 
	We denote $\Gr_{\Ug_0}(\Ug):=\{\Wg\in\Gr(\Ug)\mid \Ug_0\dotplus\Wg=\Ug\}$ and for every $\Wg\in\Gr_{\Ug_0}(\Ug)$ 
	let $E(\Wg)\colon\Ug\to\Ug$ be the oblique projection of $\Ug$ onto $\Ug_0$ along~$\Wg$. 
	Then $\Gr_{\Ug_0}(\Ug)$ is an open subset of $\Gr(\Ug)$ and the mapping 
	$$E\colon \Gr_{\Ug_0}(\Ug)\to\Bc(\Ug),\quad \Wg\mapsto E(\Wg)$$
	is real analytic. 
\end{theorem}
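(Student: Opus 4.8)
The plan is to reduce everything to a single fixed complement of $\Ug_0$ and to a local chart of the Grassmann manifold around it, and then to express the oblique projection through the Moore–Penrose inverse, whose analyticity on the locus where the rank is constant is standard. First I would fix once and for all a linear complement $\Ug_1$ of $\Ug_0$ in $\Ug$, so $\Ug=\Ug_0\dotplus\Ug_1$, and let $F\in\Bc(\Ug)$ be the oblique projection onto $\Ug_0$ along $\Ug_1$; thus $\Ker F=\Ug_1$, $\Ran F=\Ug_0$. The standard chart of $\Gr(\Ug)$ centered at $\Ug_1$ is the affine space of graphs: for $T\in\Hom(\Ug_1,\Ug_0)$ put $\Wg_T:=\{v+Tv\mid v\in\Ug_1\}$; the assignment $T\mapsto\Wg_T$ is a real analytic diffeomorphism onto an open neighborhood $\Oc$ of $\Ug_1$ in $\Gr(\Ug)$, and moreover $\Ug_0\dotplus\Wg_T=\Ug$ for every $T$ (since $\Wg_T$ still projects isomorphically onto $\Ug_1$ along $\Ug_0$). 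This already shows that $\Ug_1$ is an interior point of $\Gr_{\Ug_0}(\Ug)$; running the same argument with $\Ug_1$ replaced by an arbitrary $\Wg\in\Gr_{\Ug_0}(\Ug)$ shows $\Gr_{\Ug_0}(\Ug)$ is open. Since analyticity is a local property and the charts $\{\Wg_T\}$ (as $\Wg$ varies over $\Gr_{\Ug_0}(\Ug)$) cover $\Gr_{\Ug_0}(\Ug)$, it suffices to prove that $T\mapsto E(\Wg_T)$ is real analytic on the open set $\{T\in\Hom(\Ug_1,\Ug_0)\mid \Wg_T\in\Gr_{\Ug_0}(\Ug)\}=\Hom(\Ug_1,\Ug_0)$ near $T=0$.

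Next I would write down an explicit formula for $E(\Wg_T)$. Define $R_T\colon\Ug_1\to\Ug$ by $R_Tv:=v+Tv$, i.e. $R_T=\iota+T$ where $\iota\colon\Ug_1\hookrightarrow\Ug$ is the inclusion; then $R_T$ is injective with range $\Wg_T$, and it depends (affine-)linearly, hence analytically, on $T$. A vector $u\in\Ug$ decomposes uniquely as $u=u_0+R_Tv$ with $u_0\in\Ug_0$, $v\in\Ug_1$, and by definition $E(\Wg_T)u=u_0$; equivalently $(\id-E(\Wg_T))u=R_Tv$, where $v$ is determined by applying $F$: since $F$ kills $\Ug_0$ we get $Fu=FR_Tv$, and $FR_T\colon\Ug_1\to\Ug_0$ is invertible at $T=0$ (it is then $F\iota$, an isomorphism $\Ug_1\xrightarrow{\sim}\Ug_0\subseteq\Ug$ — more precisely its corestriction is). Hence near $T=0$ the operator $FR_T$ (viewed as a map $\Ug_1\to\Ug_0$) is invertible, $v=(FR_T)^{-1}Fu$, and therefore
$$E(\Wg_T)=\id-R_T\,(FR_T)^{-1}\,F.$$
Inversion of operators is a real analytic operation on the open set of invertible operators, and $T\mapsto R_T$, $T\mapsto FR_T$ are analytic (indeed affine); composing, $T\mapsto E(\Wg_T)$ is real analytic near $T=0$. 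One may alternatively package $(FR_T)^{-1}F$ as a Moore–Penrose inverse of $R_T$ relative to suitably chosen scalar products (so as to connect with the tools of this section), but the bare linear-algebra argument above already suffices.

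The only genuine point requiring care — and the one I would single out as the main obstacle — is the bookkeeping needed to see that the local formula $E(\Wg_T)=\id-R_T(FR_T)^{-1}F$ computed in one chart agrees, on overlaps, with the analogous formula in another chart, and hence glues to a globally well-defined analytic map $E$ on $\Gr_{\Ug_0}(\Ug)$. This is really a matter of checking that both formulas produce the unique operator with kernel $\Wg_T$ acting as the identity on $\Ug_0$, which is immediate once one verifies directly that $\id-R_T(FR_T)^{-1}F$ annihilates $\Wg_T$ (feed in $u=R_Tv$) and fixes $\Ug_0$ (feed in $u\in\Ug_0$, using $Fu=u$ and that $(FR_T)^{-1}$ lands in $\Ug_1$ while... one checks $R_T(FR_T)^{-1}u$ has the same $\Ug_0$-component as... ). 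The remaining verifications — that the charts cover, that transition maps are analytic (a standard fact about $\Gr(\Ug)$), and that composition and inversion preserve real analyticity — are routine, so the proof reduces to the explicit formula and its two defining properties.
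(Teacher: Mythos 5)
Your overall route is genuinely different from the paper's and, once corrected, is actually cleaner: the paper fixes a scalar product, writes $E(\Wg)=P_{\Ug_0}\bigl((\1-P_{\Wg})P_{\Ug_0}\bigr)^\dagger(\1-P_{\Wg})$ (Lemma~\ref{lin1}, from Hansen's formula), checks that $\Ker\bigl((\1-P_{\Wg})P_{\Ug_0}\bigr)=\Ug_0^\perp$ has constant dimension, and then invokes Leiterer--Rodman (Lemma~\ref{lin2}) on analyticity of Moore--Penrose inverses. You instead fix a complement $\Ug_1$, parameterize $\Gr_{\Ug_0}(\Ug)$ by the graph chart $T\mapsto\Wg_T$, and derive a closed formula for $E(\Wg_T)$ in terms of $T$ alone, so that analyticity is immediate. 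That trades a citation-heavy argument for an explicit one, and is perfectly legitimate.

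However, there is a genuine bookkeeping error in your formula that would make the proof collapse as written. You declare $F$ to be the projection onto $\Ug_0$ along $\Ug_1$ (so $\Ker F=\Ug_1$), but two lines later use ``$F$ kills $\Ug_0$,'' which is the opposite projection. With your stated $F$, one computes $FR_T v=F(v+Tv)=Tv$, so $FR_T=T\colon\Ug_1\to\Ug_0$, which is \emph{not} invertible at $T=0$; your claim that $F\iota$ is an isomorphism $\Ug_1\xrightarrow{\sim}\Ug_0$ is false under either reading of $F$ (it is $0$ if $F$ fixes $\Ug_0$, and $\id_{\Ug_1}$ if $F$ kills $\Ug_0$). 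The fix is to let $G:=\id-F$ be the projection onto $\Ug_1$ along $\Ug_0$; then from $u=u_0+R_Tv$ one gets $Gu=GR_Tv=v$, so $v=Gu$ and
\[
E(\Wg_T)=\id-R_T G,
\]
which is \emph{affine} in $T$ (the $(FR_T)^{-1}$ factor was spurious), hence trivially real analytic. You should verify this formula has the two defining properties: for $u\in\Ug_0$, $Gu=0$ so $E(\Wg_T)u=u$; and for $u=R_Tv\in\Wg_T$, $Gu=v$ so $E(\Wg_T)u=R_Tv-R_Tv=0$.

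Finally, the point you single out as ``the main obstacle'' --- gluing the local formula across overlapping charts --- is a non-issue. The single graph chart centered at $\Ug_1$ already covers \emph{all} of $\Gr_{\Ug_0}(\Ug)$: any $\Wg$ complementary to $\Ug_0$ projects isomorphically onto $\Ug_1$ along $\Ug_0$ and is therefore the graph $\Wg_T$ of a unique $T\in\Hom(\Ug_1,\Ug_0)$ (this is precisely the content of Lemma~\ref{lin3} in the paper). So $\Gr_{\Ug_0}(\Ug)=\{\Wg_T\mid T\in\Hom(\Ug_1,\Ug_0)\}$ is one chart, and $E$ is globally well-defined by the universal property of the oblique projection; you only need analyticity in this one chart, which the corrected formula gives you directly.
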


The proof of this theorem is based on Moore-Penrose inverses (sometimes called pseudo-inverses in the literature),
so we will briefly recall that notion and its properties that are needed below. 

\begin{definition}
	\normalfont
	Let $\Hc$ be any finite-dimensional real Hilbert space. 
	For every operator $A\in\Bc(\Hc)$ its \emph{Moore-Penrose inverse} is the operator 
	$A^\dagger:=B\in\Bc(\Hc)$ that is uniquely determined by the equations 
	$$ABA=A,\ BAB=B,\ (AB)^*=AB,\ (BA)^*=BA$$
	(see for instance \cite[subsect. 5.5.4]{GvL96}). 
	Then $A^\dagger$ exists for every $A\in\Bc(\Hc)$.
\end{definition}

\begin{lemma}\label{lin1}
	Let $\Hc$ be any finite-dimensional real Hilbert space, 
	$\Xc,\Yc\in\Gr(\Hc)$ with $\Xc\dotplus\Yc=\Hc$ and define the linear operator $F\colon\Hc\to\Hc$ 
	as the oblique projection on $\Xc$ along~$\Yc$. 
	  Then $F=P_{\Xc}((\1-P_{\Yc})P_{\Xc})^\dagger (\1-P_{\Yc})$.  
 \end{lemma}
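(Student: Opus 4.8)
The plan is to verify the four Moore–Penrose equations directly for the candidate operator, after first understanding the geometry of the building blocks. Write $B:=((\1-P_{\Yc})P_{\Xc})^\dagger$ and set $A:=(\1-P_{\Yc})P_{\Xc}$, so that the claimed formula reads $F=P_{\Xc}BA' $ where $A'=(\1-P_{\Yc})$; note $A=A'P_{\Xc}$. The key preliminary observation is that since $\Xc\dotplus\Yc=\Hc$, the restriction of $\1-P_{\Yc}$ to $\Xc$ is injective: if $x\in\Xc$ and $(\1-P_{\Yc})x=0$ then $x=P_{\Yc}x\in\Yc$, hence $x\in\Xc\cap\Yc=\{0\}$. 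Consequently $A=(\1-P_{\Yc})P_{\Xc}$ has $\Ran A=(\1-P_{\Yc})\Xc$ and $\Ker A=\Xc^{\perp}$ (as $\Ran P_{\Xc}=\Xc$ and $(\1-P_{\Yc})|_{\Xc}$ is injective). From the general theory of the Moore–Penrose inverse one then has $AA^\dagger=P_{\Ran A}$ and $A^\dagger A=P_{(\Ker A)^{\perp}}=P_{\Xc}$; these two identities will do most of the work.

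First I would compute $FA = P_{\Xc}BA'\cdot A' P_{\Xc}$. Since $A'=\1-P_{\Yc}$ is itself an orthogonal projection, $A'A'=A'$, so $FA=P_{\Xc}BA'P_{\Xc}=P_{\Xc}BA=P_{\Xc}(A^\dagger A)=P_{\Xc}P_{\Xc}=P_{\Xc}$; here I used $A^\dagger A=P_{\Xc}$ from the previous paragraph, together with $P_{\Xc}^2=P_{\Xc}$. This single computation, $FA=P_{\Xc}$, is the crux: it immediately gives $F$ acting as the identity on $\Ran A=(\1-P_{\Yc})\Xc$. But I actually want $F$ to be the identity on $\Xc$ and to kill $\Yc$. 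So next I check the two defining properties of the oblique projection directly, which is cleaner than matching it to another characterization. For $w\in\Xc$: write $w=P_{\Xc}w$, then $Fw=P_{\Xc}BA'w=P_{\Xc}B(A'P_{\Xc})w=P_{\Xc}(BA)w$, and $BA=A^\dagger A=P_{\Xc}$, so $Fw=P_{\Xc}P_{\Xc}w=w$. For $y\in\Yc$: $A'y=(\1-P_{\Yc})y=0$, hence $Fy=P_{\Xc}BA'y=0$. Thus $\Ker F\supseteq\Yc$ and $Fw=w$ for $w\in\Xc$, so $F$ is idempotent with range exactly $\Xc$ (range contains $\Xc$; and $\Ran F\subseteq\Ran P_{\Xc}=\Xc$), and kernel exactly $\Yc$ (since $\Hc=\Xc\dotplus\Yc$ and $F$ is the identity on $\Xc$ forces $\Ker F=\Yc$). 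This is precisely the oblique projection on $\Xc$ along $\Yc$, proving the lemma.

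The main obstacle is purely bookkeeping: one must be careful about which of the standard identities $A^\dagger A=P_{(\Ker A)^{\perp}}$ and $AA^\dagger=P_{\Ran A}$ is being invoked, and to pin down $\Ker A=\Xc^{\perp}$ correctly, for which the injectivity of $(\1-P_{\Yc})|_{\Xc}$ — itself a consequence of the hypothesis $\Xc\dotplus\Yc=\Hc$ — is essential. Once those structural facts about $A$ are in place, everything reduces to repeatedly using $P_{\Xc}^2=P_{\Xc}$, $(\1-P_{\Yc})^2=\1-P_{\Yc}$, $BA=P_{\Xc}$, and the factorization $A=(\1-P_{\Yc})P_{\Xc}$; no genuinely hard step remains. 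If one prefers to phrase the argument via the four Penrose equations for $F$ as a ``generalized'' identity rather than via the two range/kernel properties, the computations are of the same flavor and equally short, but the direct check of $\Ker F=\Yc$ and $F|_{\Xc}=\id_{\Xc}$ seems to me the most transparent route and is the one I would write up.
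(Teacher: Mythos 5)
Your proof is correct, but it takes a genuinely different route from the paper's. The paper's proof is a one-liner: it cites Hansen's Theorem~1 in \cite{Ha13}, which states that for any $X,Y_0\in\Bc(\Hc)$ with $\Ran X=\Xc$ and $\Ran Y_0=\Yc^\perp$ one has $F=X(Y_0^*X)^\dagger Y_0^*$, and then specializes to $X=P_{\Xc}$, $Y_0=\1-P_{\Yc}$ (using that $\1-P_\Yc$ is self-adjoint with range $\Yc^\perp$). You instead give a self-contained verification: after identifying $\Ker A=\Xc^\perp$ for $A:=(\1-P_\Yc)P_\Xc$ (using injectivity of $(\1-P_\Yc)|_\Xc$, which follows from $\Xc\cap\Yc=\{0\}$), you invoke the standard identity $A^\dagger A=P_{(\Ker A)^\perp}=P_\Xc$ to show the proposed operator fixes $\Xc$ pointwise, and the trivial computation $(\1-P_\Yc)y=0$ for $y\in\Yc$ to show it annihilates $\Yc$; since $\Hc=\Xc\dotplus\Yc$, these two facts characterize the oblique projection. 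Your argument is longer but entirely elementary, replacing the external reference by a short direct check; it also isolates the mechanism (the identity $A^\dagger A=P_{(\Ker A)^\perp}$ together with the factorization $A=(\1-P_\Yc)P_\Xc$) that makes the formula work, which the paper's proof-by-citation hides. One small stylistic note: the opening computation $FA=P_\Xc$ in your second paragraph, while true, is not actually used in the final argument and could be dropped; the $AA^\dagger=P_{\Ran A}$ identity you mention is likewise unused.
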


\begin{proof}
	It follows by \cite[Th. 1]{Ha13} that for any operators $X,Y_0\in\Bc(\Hc)$ with $\Ran X=\Xc$ 
	and $\Ran Y_0=\Yc^\perp$, one has $F=X(Y_0^*X)^\dagger Y_0^*$. 
	Hence for $X=P_{\Xc}$ and $Y_0=\1-P_{\Yc}$ we obtain the assertion. 
\end{proof}

\begin{lemma}\label{lin2}
	Let $\Hc$ be a finite-dimensional real Hilbert space, 
	$M$ be a real analytic manifold and assume that $A\colon M\to\Bc(\Hc)$ 
	is a real analytic map such that the function $\dim\Ker A(\cdot)$ is locally constant on~$M$. 
	Then the map $A(\cdot)^\dagger$ is real analytic. 
\end{lemma}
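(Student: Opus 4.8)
The plan is to reduce to a purely local statement and then exploit the explicit rational formula for the Moore--Penrose inverse in terms of a suitable matrix block decomposition. Since real analyticity is a local property, it suffices to prove that $A(\cdot)^\dagger$ is real analytic near an arbitrary point $p_0\in M$. Set $r:=\rank A(p_0)$; by hypothesis $\dim\Ker A(\cdot)$, hence $\rank A(\cdot)=\dim\Hc-\dim\Ker A(\cdot)$, equals $r$ on a neighbourhood of $p_0$, so after shrinking $M$ we may assume $\rank A(p)=r$ for all $p\in M$. The case $r=0$ is trivial since then $A(\cdot)\equiv 0$ and $A(\cdot)^\dagger\equiv 0$, so assume $1\le r\le\dim\Hc$.

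Next I would set up coordinates. Fix orthonormal bases and write $A(p)$ as a real $n\times n$ matrix depending analytically on $p$, where $n=\dim\Hc$. Since $\rank A(p_0)=r$, there is an $r\times r$ submatrix of $A(p_0)$ that is invertible; by continuity of the determinant, that same submatrix of $A(p)$ is invertible for $p$ near $p_0$, so after permuting the orthonormal basis vectors (an orthogonal change of coordinates, which commutes with taking adjoints and with $(\cdot)^\dagger$) we may assume that the leading $r\times r$ block $A_{11}(p)$ of
$$
A(p)=\begin{pmatrix} A_{11}(p) & A_{12}(p)\\ A_{21}(p) & A_{22}(p)\end{pmatrix}
$$
is invertible, with $A_{11}(\cdot)^{-1}$ real analytic by Cramer's rule. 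Because $\rank A(p)=r$ is exactly the size of the invertible block, the bottom block row is a linear combination of the top one: writing $C(p):=A_{21}(p)A_{11}(p)^{-1}$ one gets $A_{21}=C A_{11}$ and $A_{22}=C A_{12}$, and $C(\cdot)$ is real analytic. Hence $A(p)=\begin{pmatrix}\1_r\\ C(p)\end{pmatrix} A_{11}(p)\begin{pmatrix}\1_r & B(p)\end{pmatrix}$, where $B(p):=A_{11}(p)^{-1}A_{12}(p)$ is real analytic. Thus $A(p)=L(p)R(p)$ is a full-rank factorization with $L(p):=\begin{pmatrix}\1_r\\ C(p)\end{pmatrix}A_{11}(p)$ of size $n\times r$ and $R(p):=\begin{pmatrix}\1_r & B(p)\end{pmatrix}$ of size $r\times n$, both of rank $r$ and both real analytic in $p$.

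Finally I would invoke the classical closed formula for the pseudo-inverse of a full-rank product: if $A=LR$ with $L$ of full column rank and $R$ of full row rank, then $L^\dagger L=\1_r=R R^\dagger$, $L^\dagger=(L^*L)^{-1}L^*$, $R^\dagger=R^*(RR^*)^{-1}$, and $A^\dagger=R^\dagger L^\dagger=R^*(RR^*)^{-1}(L^*L)^{-1}L^*$; one checks the four Penrose equations directly. Here $L(p)^*L(p)$ and $R(p)R(p)^*$ are $r\times r$, real analytic in $p$, and invertible for every $p$ (full rank of $L(p)$ and $R(p)$), so their inverses are real analytic by Cramer's rule. Therefore
$$
A(p)^\dagger=R(p)^*\bigl(R(p)R(p)^*\bigr)^{-1}\bigl(L(p)^*L(p)\bigr)^{-1}L(p)^*
$$
is a composition of real analytic matrix-valued maps, hence real analytic near $p_0$. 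I expect the main technical point to be the passage to the analytic full-rank factorization: one has to be careful that the constancy of $\dim\Ker A(\cdot)$ is used precisely where it is needed, namely to guarantee both that a fixed $r\times r$ block stays invertible and that no larger invertible submatrix appears, so that the factorization really has inner size exactly $r$ and the Gram matrices $L^*L$, $RR^*$ stay invertible. Everything else is the standard verification of the Penrose equations and an application of Cramer's rule.
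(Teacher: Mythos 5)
The paper itself does not prove this lemma; it simply cites \cite[Cor.~3.5]{LR12}. Your proposal supplies a complete, self-contained, elementary argument, so it is by construction a different route, and it is correct. The key steps all check out: (a) local constancy of $\dim\Ker A(\cdot)$ implies local constancy of $\rank A(\cdot)=:r$, and by shrinking to a neighbourhood of $p_0$ and conjugating by permutation matrices $U,V$ (which is harmless because $(UAV)^\dagger=V^*A^\dagger U^*$ for unitary $U,V$) you may assume the leading $r\times r$ block $A_{11}(p)$ is invertible on that neighbourhood; (b) the constancy of the rank --- not just the persistence of invertibility of $A_{11}$ --- is exactly what forces the Schur complement $A_{22}-A_{21}A_{11}^{-1}A_{12}$ to vanish identically, giving the analytic full-rank factorization $A=LR$ with $L=\bigl(\begin{smallmatrix}\1_r\\ C\end{smallmatrix}\bigr)A_{11}$ of full column rank and $R=(\1_r\ B)$ of full row rank; (c) the rational formula $A^\dagger=R^*(RR^*)^{-1}(L^*L)^{-1}L^*$ is verified directly against the four Penrose equations and yields analyticity because $L^*L$ and $RR^*$ are invertible $r\times r$ matrices depending analytically on $p$. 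Compared with the cited Leiterer--Rodman result (stated in a considerably more general setting of generalized inverses and higher-order smoothness), your argument is narrower but more transparent and completely elementary; for the purposes of this paper, where only the finite-dimensional real-analytic case is needed, it would serve just as well as the citation and has the advantage of being self-contained.
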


\begin{proof}
	See \cite[Cor. 3.5]{LR12}. 
\end{proof}

The following lemma is essentially known but we sketch its proof in a coordinate-free manner, which carries over to infinite-dimensional Hilbert spaces. 

\begin{lemma}\label{lin3}
	Let $\Hc$ be a finite-dimensional real Hilbert space, 
	with a fixed linear subspace $\Ug_0\subseteq\Hc$. 
	Then the set $\Gr_{\Ug_0}(\Hc)$ is an open subset of the 
	real analytic manifold $\Gr(\Hc)$ and the map 
	$$\chi\colon \Gr_{\Ug_0}(\Hc)\to\Bc(\Ug_0^\perp,\Ug_0),\quad 
	\chi(\Wg)=P_{\Ug_0}\circ(P_{\Ug_0^\perp}\vert_{\Wg})^{-1}$$
	is a real analytic diffeomorphism with its inverse 
	$$\chi^{-1}\colon \Bc(\Ug_0^\perp,\Ug_0)\to \Gr_{\Ug_0}(\Hc), \quad 
	\chi^{-1}(T)=\{v+Tv\mid v\in\Ug_0^\perp\}.$$
\end{lemma}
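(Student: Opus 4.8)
The plan is to exhibit $\chi$ and $\chi^{-1}$ explicitly as claimed and verify they are mutually inverse real analytic maps, deducing openness of $\Gr_{\Ug_0}(\Hc)$ along the way. First I would check that $\chi$ is well defined: for $\Wg\in\Gr_{\Ug_0}(\Hc)$ the condition $\Ug_0\dotplus\Wg=\Hc$ forces $\dim\Wg=\dim\Ug_0^\perp$, and $P_{\Ug_0^\perp}\vert_{\Wg}\colon\Wg\to\Ug_0^\perp$ is injective (its kernel is $\Wg\cap\Ug_0=\{0\}$), hence a linear isomorphism, so $(P_{\Ug_0^\perp}\vert_{\Wg})^{-1}\in\Bc(\Ug_0^\perp,\Wg)$ and $\chi(\Wg)=P_{\Ug_0}\circ(P_{\Ug_0^\perp}\vert_{\Wg})^{-1}\in\Bc(\Ug_0^\perp,\Ug_0)$ makes sense. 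Conversely, for $T\in\Bc(\Ug_0^\perp,\Ug_0)$ the graph $\Wg_T:=\{v+Tv\mid v\in\Ug_0^\perp\}$ is a linear subspace of dimension $\dim\Ug_0^\perp$; since $\Wg_T\cap\Ug_0=\{0\}$ (an element $v+Tv\in\Ug_0$ forces $v=0$ because $v\in\Ug_0^\perp$) and the dimensions add up to $\dim\Hc$, we get $\Ug_0\dotplus\Wg_T=\Hc$, i.e.\ $\Wg_T\in\Gr_{\Ug_0}(\Hc)$. A short computation then shows $\chi(\Wg_T)=T$ and $\Wg_{\chi(\Wg)}=\Wg$, so $\chi$ and the stated $\chi^{-1}$ are mutually inverse bijections; in particular $\Gr_{\Ug_0}(\Hc)$ is in bijection with the vector space $\Bc(\Ug_0^\perp,\Ug_0)$.

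Next I would address analyticity and openness. For openness it suffices to produce a real analytic chart of $\Gr(\Hc)$ around each $\Wg\in\Gr_{\Ug_0}(\Hc)$ whose image meets $\Gr_{\Ug_0}(\Hc)$ in an open set; equivalently, one shows the condition $\Ug_0\dotplus\Wg=\Hc$ is open in $\Gr(\Hc)$. Working through the identification $\Gr(\Hc)\cong\Pc(\Hc)$, the subspace $\Wg$ complements $\Ug_0$ exactly when the operator $P_{\Ug_0}+P_{\Wg}$ (or, more precisely, $P_{\Ug_0^\perp}\vert_\Wg$, read as an operator $\Hc\to\Hc$ composed appropriately) has maximal rank, and maximality of rank is an open condition on $\Pc(\Hc)$ since it is the nonvanishing of a suitable minor, which is a real analytic — indeed polynomial — function of $P_{\Wg}$. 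To get analyticity of $\chi$ itself, I would write $\chi(\Wg)$ as a composition of operations that are each real analytic in $P_{\Wg}$: the assignment $\Wg\mapsto P_{\Wg}$ is analytic by definition of the manifold structure, and then $\chi(\Wg)=P_{\Ug_0}(P_{\Ug_0^\perp}P_{\Wg}\vert_{\cdots})^{-1}$ can be expressed using a Moore-Penrose inverse — take $A(\Wg):=P_{\Ug_0^\perp}P_{\Wg}$, whose kernel has locally constant dimension on $\Gr_{\Ug_0}(\Hc)$, so Lemma~\ref{lin2} gives that $A(\Wg)^\dagger$ is real analytic, and $\chi(\Wg)=P_{\Ug_0}\,A(\Wg)^\dagger$ followed by restriction to $\Ug_0^\perp$. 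Analyticity of $\chi^{-1}$ is immediate, since $T\mapsto P_{\Wg_T}$ is a rational (hence real analytic) function of $T$: writing $\iota_T\colon\Ug_0^\perp\to\Hc$, $v\mapsto v+Tv$, one has $P_{\Wg_T}=\iota_T(\iota_T^*\iota_T)^{-1}\iota_T^*$ with $\iota_T^*\iota_T=\1+T^*T$ invertible and depending polynomially on $T$.

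The main obstacle I anticipate is not any single hard estimate but rather threading the identifications carefully: $\chi(\Wg)$ is defined via the inverse of the \emph{restriction} $P_{\Ug_0^\perp}\vert_{\Wg}$, a map between two moving-target spaces, so to apply Lemma~\ref{lin2} one must first re-express this restricted inverse in terms of a fixed-space operator whose Moore-Penrose inverse does the job — the point being that $(P_{\Ug_0^\perp}P_{\Wg})^\dagger$ agrees with $(P_{\Ug_0^\perp}\vert_{\Wg})^{-1}$ after the appropriate source/target restrictions precisely because $P_{\Wg}$ kills $\Wg^\perp$ and $P_{\Ug_0^\perp}\vert_\Wg$ is bijective onto $\Ug_0^\perp$. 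Verifying that this identity of operators holds, and that $\dim\Ker(P_{\Ug_0^\perp}P_{\Wg})$ is constant (equal to $\dim\Hc-\dim\Ug_0^\perp$) on all of $\Gr_{\Ug_0}(\Hc)$, is the delicate bookkeeping step; once it is in place, Lemmas~\ref{lin1} and~\ref{lin2} do the analytic work and the diffeomorphism claim follows from the explicit formulas for $\chi$ and $\chi^{-1}$ together with the chain rule.
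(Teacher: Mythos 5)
Your proposal is correct, but it takes a more elaborate route than the paper. The paper's proof is terse: after noting the two maps are mutually inverse, it only verifies explicitly that $\chi^{-1}$ is real analytic, by writing down the $2\times 2$ block formula
$$P_{\chi^{-1}(T)}=
\begin{pmatrix}
 (\1+T^*T)^{-1}  & T^*(\1+TT^*)^{-1}\\
 T(\1+T^*T)^{-1} & TT^*(\1+TT^*)^{-1}
\end{pmatrix}$$
for the orthogonal projection onto the graph of $T$ (citing \cite{An15}), and then declares the proof finished --- the openness of $\Gr_{\Ug_0}(\Hc)$ and the analyticity of $\chi$ being left implicit (they follow because the $(1,1)$-block of $P_{\chi^{-1}(T)}$ is invertible, which is an open condition on $\Pc(\Hc)$, and $T$ is recovered as $P_{21}P_{11}^{-1}$, a formula that is analytic wherever $P_{11}$ is invertible). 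You instead treat each of the three claims head-on: you re-derive the projection formula as $\iota_T(\iota_T^*\iota_T)^{-1}\iota_T^*$ with $\iota_T^*\iota_T=\1+T^*T$ (which is exactly what \cite{An15}'s formula encodes), you prove openness directly from the maximal-rank condition on $P_{\Ug_0^\perp}P_{\Wg}$, and you establish analyticity of $\chi$ itself via the Moore--Penrose route of Lemma~\ref{lin2}, observing correctly that $\Ker(P_{\Ug_0^\perp}P_\Wg)=\Wg^\perp$ has constant dimension $\dim\Ug_0$ on $\Gr_{\Ug_0}(\Hc)$ and that $\chi(\Wg)=P_{\Ug_0}(P_{\Ug_0^\perp}P_\Wg)^\dagger\vert_{\Ug_0^\perp}$. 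Note that the paper reserves this Moore--Penrose machinery for the proof of Theorem~\ref{lin0} and does not invoke it in Lemma~\ref{lin3}; so while both proofs are valid, the paper's is shorter and leans on the cited reference, whereas yours is self-contained and uses Lemma~\ref{lin2} in a place where the paper gets by without it.
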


\begin{proof}
	It is straightforward to check that the maps mentioned above are inverse to each other. 
	The real analytic structure of $\Gr(\Hc)$ was defined at the end of the Introduction via the bijective map 
	$\Gr(\Hc)\to\Pc(\Hc)$, $\Wg\mapsto P_{\Wg}$, hence in order to prove that the map $\chi^{-1}$ is real analytic we must check that the map 
	$$\Bc(\Ug_0^\perp,\Ug_0)\to \Pc(\Hc), \quad T\mapsto P_{\chi^{-1}(T)}$$
	is real analytic. 
	Here $P_{\chi^{-1}(T)}$ is the orthogonal projection from $\Hc=\Ug_0^\perp\oplus\Ug_0$ onto the graph of the operator $T\colon\Ug_0^\perp\to\Ug_0$, hence one has 
	$$P_{\chi^{-1}(T)}=
	\begin{pmatrix}
	 (\1+T^*T)^{-1}  & T^*(\1+TT^*)^{-1}\\
	 T(\1+T^*T)^{-1} & TT^*(\1+TT^*)^{-1}
	\end{pmatrix}\in\Pc(\Hc).$$
	(See for instance \cite[Eq. (5)]{An15}.)  
	This shows that the map $T\mapsto P_{\chi^{-1}(T)}$ is real analytic, and we are done. 
\end{proof}

\begin{proof}[Proof of Theorem~\ref{lin0}]
	We fix a scalar product on~$\Ug$, which turns $\Ug$ into a finite-dimensional real Hilbert space. 
	Then the real analytic structure of $\Gr_{\Ug_0}(\Ug)$ is clarified by Lemma~\ref{lin3}. 
	
	For every $\Wg\in\Gr_{\Ug_0}(\Ug)$ we have by Lemma~\ref{lin1}, 
	\begin{equation}\label{lin0_proof_eq1}
	E(\Wg)= P_{\Ug_0}((\1-P_{\Wg})P_{\Ug_0})^\dagger (\1-P_{\Wg}).
	\end{equation}
	We now check that 
	\begin{equation}\label{lin0_proof_eq2}
	\Ker((\1-P_{\Wg})P_{\Ug_0})=\Ug_0^\perp.
	\end{equation}
	In fact, $\Ker((\1-P_{\Wg})P_{\Ug_0})\supseteq\Ker P_{\Ug_0}=\Ug_0^\perp$.
	Conversely, if $x\in\Ker((\1-P_{\Wg})P_{\Ug_0})$, then $P_{\Ug_0}x\in\Ker(\1-P_{\Wg})=\Wg$, 
	hence $P_{\Ug_0}x\in\Ug_0\cap\Wg=\{0\}$, and then $x\in\Ug_0^\perp$. 
	
	Now recall that the map $\Gr(\Ug)\to\Pc(\Ug)$, $\Wg\mapsto P_{\Wg}$, 
	is a real analytic diffeomorphism by the definition of the real analytic structure of  $\Gr(\Ug)$, 
	and $\Pc(\Ug)$
	is a compact real analytic submanifold of the vector space~$\Bc(\Ug)$. 
	It follows by \eqref{lin0_proof_eq1}--\eqref{lin0_proof_eq2} and Lemma~\ref{lin2} 
	that the map $E\colon \Gr_{\Ug_0}(\Ug)\to\Bc(\Ug)$ is a composition of real analytic maps, 
	and this completes the proof. 
\end{proof}

\section{Schubert cells in Grassmann manifolds}\label{cells}

Throughout this section we denote by $\Ug$ a real vector space with $n:=\dim\Ug<\infty$.

\begin{definition}
	\normalfont 
	For every $j\in\{0,\dots,n\}$ we define 
	$$\Gr(\Ug,j):=\{\Wg\in\Gr(\Ug)\mid\dim\Wg=j\}.$$ 
A \emph{complete flag} in $\Gr(\Ug)$ is a sequence 
	${\Fg_\bullet}=(\Fg_j)_{0\le j\le n}$ with $\Fg_j\in \Gr(\Ug,j)$ and 
	$\Fg_j\subset\Fg_{j+1}$ for $j=0,\dots,n-1$.  
	Hence 
	$${\Fg_\bullet}:\quad \{0\}=\Fg_0\subset\Fg_1\subset\cdots\subset\Fg_n=\Ug.$$
	The \emph{flag manifold} of~$\Ug$ is the set $\Fl(\Ug)$ of all complete flags in $\Gr(\Ug)$. 
\end{definition}

We now introduce the Schubert stratification of the Grassmann manifold with respect to a complete flag. 
The Schubert cells are usually defined using intersections rather than sums of subspaces. 
However, the following definition is more suitable for our purposes, is related to the so-called coarse stratification from representation theory of nilpotent Lie groups (used for instance in \cite{BBL17}) and is equivalent to the traditional definition, as it follows by Proposition~\ref{Gr1}\eqref{Gr1_item8} below. 

\begin{definition}
	\normalfont
	For $\Wg\in\Gr(\Ug)$, its set of \emph{jump indices}\index{jump indices} with respect to a complete flag ${\Fg_\bullet}\in\Fl(\Ug)$ is 
	$$\jump_{{\Fg_\bullet}}(\Wg):=\{j\in\{1,\dots,n\}\mid \Fg_j\not\subset\Wg+\Fg_{j-1}\}.$$
	For every $e\in\Pc_n$ we also define its corresponding \emph{Schubert cell}\index{Schubert cell}
	$$\Gr_{{\Fg_\bullet},e}(\Ug):=\{\Wg\in\Gr(\Ug)\mid\jump_{{\Fg_\bullet}}(\Wg)=e\}.$$
	It is well known that $\Gr_{{\Fg_\bullet},e}(\Ug)$ is a submanifold of $\Gr(\Ug)$. 
	(See for instance 
	\cite[Ch. III]{Sch68}
	.) 
	The complement $\complement e\in\Pc_n$ is called 
	the \emph{Schubert symbol}\index{Schubert symbol} of the Schubert cell $\Gr_{{\Fg_\bullet},e}(\Ug)$. 
	
	For every ${\Fg_\bullet}\in\Fl(\Ug)$ one has the disjoint union 
	$$\Gr(\Ug)=\bigsqcup_{e\in\Pc_n}\Gr_{{\Fg_\bullet},e}(\Ug).$$
\end{definition}

\begin{remark}\label{Gr0}
	\normalfont
	If ${\Fg_\bullet}\in\Fl(\Ug)$ and $\Wg\in\Gr(\Ug)$, then one has 
	$$\Wg\subseteq\Wg+\Fg_1\subseteq\cdots\subseteq\Wg+\Fg_{n-1}\subseteq\Ug.$$
	Here  
	$\dim((\Wg+\Fg_j)/(\Wg+\Fg_{j-1}))\le 1$ for $j=1,\dots,n$. 
	Indeed, if $X_j\in\Fg_j\setminus\Fg_{j-1}$, then $\Fg_j=\Fg_{j-1}\dotplus\RR X_j$, hence one has the canonical linear isomorphism  
	$$\begin{aligned}
	(\Wg+\Fg_j)/(\Wg+\Fg_{j-1})
	 &=((\Wg+\Fg_{j-1})+\RR X_j)/(\Wg+\Fg_{j-1}) \\
	 &\simeq\RR X_j/(\RR X_j\cap(\Wg+\Fg_{j-1})).
	\end{aligned}$$
	It then follows by Proposition~\ref{Gr1} that 
	$$\jump_{{\Fg_\bullet}}(\Wg)= 
	\{j\in\{1,\dots,n\}\mid \dim((\Wg+\Fg_j)/(\Wg+\Fg_{j-1}))=1\}$$ 
	and this explains why the above set is called the set of jump indices. 
\end{remark}

\begin{proposition}\label{Gr1}
	Let ${\Fg_\bullet}\in\Fl(\Ug)$ and    $X_j\in\Fg_j\setminus\Fg_{j-1}$ for $j=1,\dots,n$. 
	If $\Wg\in\Gr(\Ug)$, $e\in\Pc_n$, 
	and $\Ug_e:=\spa\{X_j\mid j\in e\}$, 
	then the following conditions are equivalent: 
	\begin{enumerate}[{\rm(i)}]
		\item\label{Gr1_item1} 
		$e=\jump_{{\Fg_\bullet}}(\Wg)$; 
		\item\label{Gr1_item2} 
		$e=\{j\in\{1,\dots,n\}\mid \Wg+\Fg_{j-1}\subsetneqq \Wg+\Fg_j\}$;
		\item\label{Gr1_item3} 
		$e= \{j\in\{1,\dots,n\}\mid \dim((\Wg+\Fg_j)/(\Wg+\Fg_{j-1}))=1\}$; 
		\item\label{Gr1_item4} 
		$e=\{j\in\{1,\dots,n\}\mid X_j\not\in \Wg+\Fg_{j-1}\}$; 
		\item\label{Gr1_item5}
		if $e=\{j_1<\cdots<j_d\}$, then for $r=0,\dots,d$ and $j_r\le j<j_{r+1}$ one has $\dim(\Wg+\Fg_{j})=r+\dim\Wg$, 
		where $j_0:=0$ and $j_{d+1}:=n+1$. 
		\item\label{Gr1_item5bis}
		in the notation of \eqref{Gr1_item5}, for $r=0,\dots,d$ and $j_r\le j<j_{r+1}$, the family of vectors $(X_{j_s})_{1\le s\le r}$ is a basis in $\Fg_j\mod\Fg_j\cap\Wg$. 
		\item\label{Gr1_item6} $\complement e=\{i\in\{1,\dots,n\}\mid \Wg\cap\Fg_{i-1}\subsetneqq \Wg\cap\Fg_i\}$;
		\item\label{Gr1_item7} $\complement e= 
		\{i\in\{1,\dots,n\}\mid \dim((\Wg\cap\Fg_i)/(\Wg\cap\Fg_{i-1}))=1\}$; 
		\item\label{Gr1_item8} if $\complement e= \{k_1<\cdots<k_{n-d}\}$, 
		then $\dim(\Wg\cap\Fg_{k_i})=i$ and $\dim(\Wg\cap\Fg_{k_i-1})=i-1$ for $i=1,\dots,n-d$.
	\end{enumerate}
	If these conditions are satisfied, 
	then $\vert e\vert=\dim(\Ug/\Wg)$ 
	and in addition: 
	\begin{enumerate}[{\rm(i)}]
		\setcounter{enumi}{9}
		\item\label{Gr1_item10} 
		One has the direct sum decomposition $\Wg\dotplus\Ug_e=\Ug$.
	 	\item\label{Gr1_item11} 
		The family $(X_j+\Wg)_{j\in e}$ is a basis in $\Ug/\Wg$.
		\item \label{Gr1_item12}
		For every $i\in \complement e$ one has 
		$\Wg+\Ug_{i-1}=\Wg\dotplus(\Ug_e\cap\Ug_{i-1})$. 
	\end{enumerate}
\end{proposition}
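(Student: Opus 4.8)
The plan is to reduce the statement to elementary linear algebra organised around two observations. First, since $X_j\in\Fg_j\setminus\Fg_{j-1}$ and $\dim(\Fg_j/\Fg_{j-1})=1$, one has $\Fg_j=\Fg_{j-1}\dotplus\RR X_j$ for every $j$; hence $X_1,\dots,X_n$ is a basis of $\Ug$ and $\Fg_j=\spa\{X_1,\dots,X_j\}$. Second, the modular identity $\dim(\Wg\cap\Fg_j)+\dim(\Wg+\Fg_j)=\dim\Wg+j$ holds for all $j$. I would first dispose of the ``sum-side'' items (i)--(iii), (v), (v-bis); then transfer to the ``intersection-side'' items (vi)--(viii) by means of the modular identity; and finally read off $\vert e\vert=\dim(\Ug/\Wg)$ together with (x)--(xii).

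On the sum side, (i)$\Leftrightarrow$(iv) is immediate: $\Fg_j=\Fg_{j-1}\dotplus\RR X_j$ gives $\Fg_j\subseteq\Wg+\Fg_{j-1}$ iff $X_j\in\Wg+\Fg_{j-1}$, and the negation of the latter is precisely the condition ``$j\in\jump_{\Fg_\bullet}(\Wg)$''. Then (iv)$\Leftrightarrow$(ii) because $\Wg+\Fg_j=(\Wg+\Fg_{j-1})+\RR X_j$, so $\Wg+\Fg_{j-1}\subsetneqq\Wg+\Fg_j$ exactly when $X_j\notin\Wg+\Fg_{j-1}$; and (ii)$\Leftrightarrow$(iii) because $\dim\bigl((\Wg+\Fg_j)/(\Wg+\Fg_{j-1})\bigr)\le 1$ always, by Remark~\ref{Gr0}. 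For (iii)$\Leftrightarrow$(v): the integer sequence $j\mapsto\dim(\Wg+\Fg_j)$ equals $\dim\Wg$ at $j=0$ and, by (iii), increases by exactly $1$ at each index of $e$ and is constant elsewhere, so $\dim(\Wg+\Fg_j)=\dim\Wg+\vert e\cap\{1,\dots,j\}\vert$, which is $\dim\Wg+r$ for $j_r\le j<j_{r+1}$; conversely (v) records exactly where the sequence strictly increases. Finally, assuming (iv) and (v), the vectors $X_{j_1},\dots,X_{j_r}$ lie in $\Fg_j$ (as $j_1<\cdots<j_r\le j$) and are linearly independent modulo $\Wg$: if $\sum_{s\le r}c_sX_{j_s}\in\Wg$ with $c_t$ the last nonzero coefficient, then $X_{j_t}\in\Wg+\spa\{X_{j_s}\mid s<t\}\subseteq\Wg+\Fg_{j_t-1}$, contradicting (iv); since their number $r$ equals $\dim(\Wg+\Fg_j)-\dim\Wg$ by (v), i.e.\ the dimension of $\Fg_j$ modulo $\Fg_j\cap\Wg$, they are a basis there, which is (v-bis), and the reverse implication is the same dimension count.

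On the intersection side, subtracting consecutive instances of the modular identity yields $\dim(\Wg\cap\Fg_j)-\dim(\Wg\cap\Fg_{j-1})=1-\bigl(\dim(\Wg+\Fg_j)-\dim(\Wg+\Fg_{j-1})\bigr)$, so by (iii) the sequence $j\mapsto\dim(\Wg\cap\Fg_j)$ strictly increases exactly on $\complement e$, which gives (vi), and (vi)$\Leftrightarrow$(vii) since its steps are at most $1$. Running the same count, now with initial value $\dim(\Wg\cap\Fg_0)=0$ and a unit jump at each element of $\complement e=\{k_1<\cdots<k_{n-d}\}$, produces $\dim(\Wg\cap\Fg_{k_i})=i$ and $\dim(\Wg\cap\Fg_{k_i-1})=i-1$, i.e.\ (viii); conversely (viii) recovers (vii) since monotonicity forces the jumps of $j\mapsto\dim(\Wg\cap\Fg_j)$ to sit precisely at the $k_i$. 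The remaining assertions follow quickly: (v) with $j=n$ (so $r=d$, using $\Fg_n=\Ug$) gives $\dim\Ug=\dim\Wg+d$, hence $\vert e\vert=d=\dim(\Ug/\Wg)$; (v-bis) with $j=n$ says $(X_j+\Wg)_{j\in e}$ is a basis of $\Ug/\Wg$, which is (xi), and since $\dim\Ug_e=\vert e\vert=\dim(\Ug/\Wg)$ this forces $\Wg\cap\Ug_e=\{0\}$ and $\Wg+\Ug_e=\Ug$, i.e.\ (x); for (xii), an induction on $k$ using (iv) shows $\Fg_{k-1}\subseteq\Wg+(\Ug_e\cap\Ug_{k-1})$ for every $k$ --- if $k\in e$ then $X_k\in\Ug_e\cap\Ug_k$, otherwise $X_k\in\Wg+\Fg_{k-1}$, which lies in the space already built by the inductive hypothesis --- whence for $i\in\complement e$ one obtains $\Wg+\Ug_{i-1}=\Wg+(\Ug_e\cap\Ug_{i-1})$, and the sum is direct because $\Ug_e\cap\Ug_{i-1}\subseteq\Ug_e$ and $\Wg\cap\Ug_e=\{0\}$ by (x).

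The whole proof is routine bookkeeping; the only place calling for real care is keeping the two ``initial value plus number of jump indices below $j$'' counts --- one for $j\mapsto\dim(\Wg+\Fg_j)$, one for $j\mapsto\dim(\Wg\cap\Fg_j)$ --- consistent with each other via the modular identity, and arranging the linear-independence-modulo-$\Wg$ step so that it invokes only item (iv), never (circularly) one of the later items.
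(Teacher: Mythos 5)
Your proof is correct and follows essentially the same route as the paper's: the same chain (iv)$\Leftrightarrow$(i)$\Leftrightarrow$(ii)$\Leftrightarrow$(iii), the same dimension identity relating $\Fg_j/(\Fg_j\cap\Wg)$ to $(\Fg_j+\Wg)/\Wg$ for (v)$\Leftrightarrow$(v-bis), the same linear-independence-mod-$\Wg$ argument driven by (iv), and the same modular-identity subtraction to pass from the sum side to the intersection side. The only cosmetic difference is in the last block: the paper establishes (x) and (xii) by telescoping chains of one-dimensional direct-sum extensions, whereas you deduce (x) from (xi) via a dimension count and prove (xii) by an induction on $k$; these are interchangeable bookkeeping variants of the same idea.
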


\begin{proof}
	The implications	$\eqref{Gr1_item4}\Leftrightarrow\eqref{Gr1_item1}\Leftrightarrow\eqref{Gr1_item2}\Leftarrow\eqref{Gr1_item3}$ are clear, and $\eqref{Gr1_item2}\Rightarrow\eqref{Gr1_item3}$ by Remark~\ref{Gr0}. 
	Moreover, $\eqref{Gr1_item3}\Leftrightarrow\eqref{Gr1_item5}$. 
	
	$\eqref{Gr1_item5}\Leftrightarrow\eqref{Gr1_item5bis}$: 
	One has 
	$$\dim(\Fg_j/(\Fg_j\cap\Wg))=\dim((\Fg_j+\Wg)/\Wg), $$
	hence clearly $\eqref{Gr1_item5}\Leftarrow\eqref{Gr1_item5bis}$. 
	For the converse implication, using again the above equality  we obtain that
	the dimension of  the space 
	$\Fg_j/(\Fg_j\cap\Wg)$ is $r$.
	Hence it suffices to check that  $(\Fg_j\cap\Wg)\cap\spa\{X_{j_s}\mid 1\le s\le r\}=\{0\}$. 
	If there exists a vector $\sum\limits_{1\le s\le r}a_sX_{j_s}\in\Fg_j\cap \Wg$ with $t:=\max\{s\in\{1,\dots,r\}\mid a_s\ne 0\}$, 
	then $X_{j_t}\in\Wg+\spa\{X_{j_s}\mid 1\le s<t\}\subseteq\Wg+\Fg_{t-1}$, which is a contradiction with $j_t\in e$, by~\eqref{Gr1_item4}. 
	
	$\eqref{Gr1_item3}\Leftrightarrow\eqref{Gr1_item7}$:
	For every $j\in\{1,\dots,n\}$ one has 
	$$\dim(\Wg+\Fg_j)+\dim(\Wg\cap\Fg_j)
	=\dim\Wg+\dim\Fg_j
	=(n-d)+j$$
	and similarly 
	$$\dim(\Wg+\Fg_{j-1})+\dim(\Wg\cap\Fg_{j-1})
	=(n-d)+(j-1).$$
	Subtracting these equalities we obtain 
	$$\dim((\Wg+\Fg_j)/(\Wg+\Fg_{j-1}))
	+\dim((\Wg\cap\Fg_j)/(\Wg\cap\Fg_{j-1}))=1$$
	and thus $\eqref{Gr1_item3}\Leftrightarrow\eqref{Gr1_item7}$. 
	
	It is also clear that $\eqref{Gr1_item6}\Leftrightarrow\eqref{Gr1_item7}$ 
	and $\eqref{Gr1_item5}\Leftrightarrow\eqref{Gr1_item8}$, 
	hence \eqref{Gr1_item1}--\eqref{Gr1_item8} are equivalent.

	We will now prove that \eqref{Gr1_item1}--\eqref{Gr1_item8} imply that \eqref{Gr1_item10} and \eqref{Gr1_item11} hold true, and then $\vert e\vert=\dim(\Ug/\Wg)$ and \eqref{Gr1_item11} follow at once. 
	
	By Remark~\ref{Gr0} and \eqref{Gr1_item3}, 
	\begin{equation}\label{decomp2_proof_eq1}
	j\in e
	\Leftrightarrow \dim((\Wg+\Fg_j)/(\Wg+\Fg_{j-1}))=1
	\Leftrightarrow \Wg+\Fg_j=(\Wg+\Fg_{j-1})\dotplus \RR X_j.
	\end{equation}
	It thus follows that if $a\in\{1,\dots,d\}$, then 
	the following implication holds true: 
	\begin{equation}\label{decomp2_proof_eq2}
	j_a\le j<j_{a+1}\implies \Wg+\Fg_{j_a}=\Wg+\Fg_j.
	\end{equation}
	Using the above facts \eqref{decomp2_proof_eq1}--\eqref{decomp2_proof_eq2} repeatedly, we obtain 
	\allowdisplaybreaks
	\begin{align}
	\Ug =\Wg+\Fg_{j_d}
	&=(\Wg+\Fg_{j_{d-1}})\dotplus\RR X_{j_d} \nonumber\\
	&=((\Wg+\Fg_{j_{d-2}})\dotplus\RR X_{j_{d-1}})\dotplus\RR X_{j_d} \nonumber\\
	&=\cdots \nonumber\\
	&=(\cdots(\Wg+\Fg_{j_1})\dotplus\cdots\dotplus\RR X_{j_{d-1}})\dotplus\RR X_{j_d} \nonumber\\
	&=\Wg+\Ug_e \nonumber
	\end{align}
	which concludes the proof of~\eqref{Gr1_item10}.
	
	For \eqref{Gr1_item10}, 
	if $i\in\complement e$, then there exists $a\in\{1,\dots,r\}$ with 
	$j_a<i<j_{a+1}$. 
	As above, by \eqref{decomp2_proof_eq1}--\eqref{decomp2_proof_eq2} we obtain 
	\allowdisplaybreaks
	\begin{align}
	\Wg+\Fg_{i-1} =\Wg+\Fg_{j_a}
	&=(\Wg+\Fg_{j_{a-1}})\dotplus\RR X_{j_a} \nonumber\\
	&=\cdots \nonumber\\
	&=\Wg\dotplus(\RR X_1\dotplus\cdots\dotplus\RR X_{j_a}) \nonumber\\
	&=\Wg+\Ug_{e\cap\{1,\dots,i-1\}} \nonumber\\
	&=\Wg+(\Ug_e\cap\Fg_{i-1}) \nonumber
	\end{align}
	where the latter equality follows by the elementary fact that 
	\begin{equation}\label{Gr1_proof_eq1}
	\Ug_{e_1}\cap\Ug_{e_2}=\Ug_{e_1\cap e_2} 
	\text{ if }\Ug_{e_k}:=\spa\{X_j\mid j\in e_k\}\text{ for }k=1,2
	\text{ and }e_1,e_2\in\Pc_n.
	\end{equation}
	This completes the proof. 
\end{proof}

\begin{example}\label{ex1}
	\normalfont
	Let ${\Fg_\bullet}\in\Fl(\Ug)$ and  $X_j\in\Fg_j\setminus\Fg_{j-1}$ for $j=1,\dots,n$. 
	Then for every $e\in\Pc_n$ the subspace $\Ug_e:=\spa\{X_j\mid j\in e\}$ satisfies $\jump_{\Fg}(\Ug_e)=\complement e$. 
	
This can be proved either by a direct argument, or by an application of Proposition~\ref{Gr1}\eqref{Gr1_item6} along with \eqref{Gr1_proof_eq1}. 
\end{example}

\begin{example}\label{ex_dim1}
	\normalfont
	Let ${\Fg_\bullet}\in\Fl(\Ug)$ and $\Wg\in\Gr(\Ug,1)$.  For every $X\in\Wg\setminus\{0\}$,  
	defining $j_0:=\min\{j\in\{1,\dots,n\}\mid X\in\Fg_j\}$  
	one has $\jump_{{\Fg_\bullet}}(\Wg)=\{1,\dots,n\}\setminus\{j_0\}$.
	
	In fact, the definition of $j_0$ is equivalent to $X\in\Fg_{j_0}\setminus\Fg_{j_0-1}$. 
	If $X_j\in\Fg_j\setminus\Fg_{j-1}$ for $j=1,\dots,n$, then by Proposition~\ref{Gr1}\eqref{Gr1_item4}, 
	$$\begin{aligned}
	\complement (\jump_{{\Fg_\bullet}}(\Wg))
	&=\{j\in\{1,\dots,n\}\mid X_j\in\RR X+\Fg_{j-1}\} \\
	&=\{j\in\{1,\dots,n\}\mid X\in\Fg_j\setminus\Fg_{j-1}\} 
	\end{aligned}$$
	and now the assertion follows directly. 
\end{example}

\begin{example}\label{ex_codim1}
	\normalfont
	Let ${\Fg_\bullet}\in\Fl(\Ug)$, $\Wg\in\Gr(\Ug,n-1)$,  
	and $j_0:=\max\{j\in\{1,\dots,n\}\mid \Fg_{j-1}\subseteq\Wg\}$.  
	Then $\jump_{{\Fg_\bullet}}(\Wg)=\{j_0\}$.
	
	In fact, since $\dim(\Ug/\Wg)=1$, one has $\vert\jump_{{\Fg_\bullet}}(\Wg)\vert=1$ by  Proposition~\ref{Gr1}, 
	hence it suffices to prove that $j_0\in\jump_{{\Fg_\bullet}}(\Wg)$. 
	By the definition of $j_0$ one has 
	$\Fg_{j_0-1}\subseteq\Wg$ and $\Fg_{j_0}\not\subseteq\Wg$, 
	hence $\Fg_{j_0}\not\subseteq\Wg+\Fg_{j_0-1}$, 
	which shows that $j_0\in\jump_{{\Fg_\bullet}}(\Wg)$. 
\end{example}

\subsection*{Bases parameterized by Schubert cells}

The final result of this section is Theorem~\ref{decomp4} which requires the following notation. 
This theorem will be used in the proof of Theorem~\ref{of} via its Corollary~\ref{decomp6}.

\begin{theorem}\label{decomp4}
	If $X_1,\dots,X_m$ is a basis of $\Ug$ and $e\subseteq\{1,\dots,m\}$, then for every $\Wg\in\Gr_{{\Fg_\bullet},e}(\Ug)$ 
	there exists a unique family of vectors $\beta(\Wg)=(Y_1,\dots,Y_m)\in\Ug^m $ satisfying the following conditions: 
	\begin{enumerate}[{\rm(i)}]
		\item\label{decomp4_item1} 
		For every $j\in e$ one has $Y_j=X_j$. 
		\item\label{decomp4_item2} 
		For every $i\in\complement e$ one has $Y_i-X_i\in\Ug_e\cap\Fg_{i-1}$. 
	\end{enumerate}
	Moreover, $\beta(\Wg)$ is a basis of $\Ug$ having the following properties: 
	\begin{enumerate}[{\rm(i)}]
		\setcounter{enumi}{2}
		\item\label{decomp4_item3} 
		One has $\Fg_j=\spa\{Y_1,\dots,Y_j\}$ for $j=1,\dots,m$. 
		\item\label{decomp4_item4} 
		The set $\{Y_i\mid i\in \complement e\}$ is a basis of $\Wg$. 
		\item\label{decomp4_item5}  The mapping 
		$\beta\colon \Gr_{{\Fg_\bullet},e}(\Ug)\to\Ug^m$, $\Wg\mapsto\beta(\Wg)$, extends to a real analytic mapping on the open subset $\Gr_{\Ug_e}(\Ug)$ of $\Gr(\Ug)$.
		\end{enumerate}
\end{theorem}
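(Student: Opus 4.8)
The plan is to construct $\beta(\Wg)$ explicitly and then recognize the construction as a composition of real analytic maps. First I would observe that conditions \eqref{decomp4_item1}--\eqref{decomp4_item2} pin down $\beta(\Wg)$ uniquely, so the main work is existence and the stated properties. For $j\in e$ set $Y_j:=X_j$. For $i\in\complement e$, I want to correct $X_i$ by an element of $\Ug_e\cap\Fg_{i-1}$ so that the result lands in $\Wg$. By Proposition~\ref{Gr1}\eqref{Gr1_item12}, for each $i\in\complement e$ one has $\Wg+\Ug_{i-1}=\Wg\dotplus(\Ug_e\cap\Ug_{i-1})$ where $\Ug_{i-1}=\spa\{X_1,\dots,X_{i-1}\}$; since $i\in\complement e$, $X_i\in\Wg+\Fg_{i-1}=\Wg+\Ug_{i-1}$, so $X_i$ decomposes uniquely as $X_i=w_i+v_i$ with $w_i\in\Wg$ and $v_i\in\Ug_e\cap\Ug_{i-1}$. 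I then define $Y_i:=X_i-v_i=w_i\in\Wg$, so that \eqref{decomp4_item2} holds by construction and \eqref{decomp4_item4} is immediate once we check linear independence.

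Next I would verify the linear-algebra properties \eqref{decomp4_item3}--\eqref{decomp4_item4}. Since $Y_i-X_i\in\Fg_{i-1}$ for all $i$ (trivially for $i\in e$ as well, since there the difference is $0$), the change-of-basis matrix from $(X_j)$ to $(Y_j)$ is unipotent upper triangular with respect to the flag order, hence $(Y_1,\dots,Y_m)$ is again a basis and $\spa\{Y_1,\dots,Y_j\}=\spa\{X_1,\dots,X_j\}=\Fg_j$ for every $j$, giving \eqref{decomp4_item3}. For \eqref{decomp4_item4}: the $Y_i$ with $i\in\complement e$ lie in $\Wg$ and there are $|\complement e|=\dim\Wg$ of them (using $|e|=\dim(\Ug/\Wg)$ from Proposition~\ref{Gr1}), and they are linearly independent because they are part of a basis of $\Ug$, so they form a basis of $\Wg$.

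The crux is \eqref{decomp4_item5}: analyticity and the extension to $\Gr_{\Ug_e}(\Ug)$. Here I would invoke Theorem~\ref{lin0} with $\Ug_0:=\Ug_e$: since $\Gr_{{\Fg_\bullet},e}(\Ug)\subseteq\Gr_{\Ug_e}(\Ug)$ by Proposition~\ref{Gr1}\eqref{Gr1_item10}, on the open set $\Gr_{\Ug_e}(\Ug)$ we have the real analytic oblique projection $E(\Wg)$ onto $\Ug_e$ along $\Wg$, and correspondingly the complementary projection $\1-E(\Wg)$ onto $\Wg$ along $\Ug_e$. The recipe above is exactly: $v_i$ is the $\Ug_e$-component of $X_i$ in the decomposition $\Ug=\Wg\dotplus\Ug_e$ when restricted appropriately — more precisely, I claim $Y_i=(\1-E(\Wg))X_i$ for $i\in\complement e$ and $Y_j=X_j$ for $j\in e$. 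To see this I must check that for $i\in\complement e$ the $\Ug_e$-component $E(\Wg)X_i$ of $X_i$ actually lies in the smaller subspace $\Ug_e\cap\Fg_{i-1}$; this follows from Proposition~\ref{Gr1}\eqref{Gr1_item12}, since $X_i\in\Wg+\Ug_{i-1}=\Wg\dotplus(\Ug_e\cap\Ug_{i-1})$ forces the $\Wg\dotplus\Ug_e$-decomposition of $X_i$ to have $\Ug_e$-part inside $\Ug_e\cap\Ug_{i-1}\subseteq\Ug_e\cap\Fg_{i-1}$. Thus $\beta$ is given on $\Gr_{\Ug_e}(\Ug)$ by $\Wg\mapsto\bigl((\1-E(\Wg))X_i\bigr)_{i\in\complement e}$ together with the constants $(X_j)_{j\in e}$, which is real analytic by Theorem~\ref{lin0}. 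The main obstacle I anticipate is precisely this identification step — confirming that the extended formula $Y_i:=(\1-E(\Wg))X_i$ continues to satisfy \eqref{decomp4_item2} on all of $\Gr_{\Ug_e}(\Ug)$, not just on the cell, and that it agrees with the uniquely determined $\beta(\Wg)$ on the cell; both reduce to careful bookkeeping with the containments in Proposition~\ref{Gr1}\eqref{Gr1_item12}, but nothing deeper is needed.
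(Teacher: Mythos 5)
Your proof is correct and follows essentially the same route as the paper: decompose $X_i=w_i+v_i$ via Proposition~\ref{Gr1}\eqref{Gr1_item12}, set $Y_i=w_i=(\1-E(\Wg))X_i$ where $E(\Wg)$ is the oblique projection onto $\Ug_e$ along $\Wg$ from Theorem~\ref{lin0}, and obtain properties~\eqref{decomp4_item3}--\eqref{decomp4_item4} from the unipotent change of basis (this is exactly the paper's Lemma~\ref{decomp3}). Your closing worry is unnecessary: the extension to $\Gr_{\Ug_e}(\Ug)$ is simply the explicit formula $\beta_i(\Wg)=X_i-E(\Wg)X_i$, which is real analytic by Theorem~\ref{lin0} and agrees with the uniquely determined $\beta$ on the cell, and it is not required to satisfy~\eqref{decomp4_item2} off $\Gr_{{\Fg_\bullet},e}(\Ug)$.
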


The proof of the above statement requires the following lemma. 

\begin{lemma}\label{decomp3}
	If $Y_1,\dots,Y_m\in\Ug$ and $Y_j-X_j\in\Fg_{j-1}$ for $j=1,\dots,m$, 
	then $Y_1,\dots,Y_m$ is a basis of $\Ug$ and $\Fg_j=\spa\{Y_1,\dots,Y_j\}$ for $j=1,\dots,m$. 
\end{lemma}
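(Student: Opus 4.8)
The plan is to prove Lemma~\ref{decomp3} first, then use it to establish Theorem~\ref{decomp4} by a two-step argument: existence and uniqueness of $\beta(\Wg)$ by a triangular recursion, then verification of properties \eqref{decomp4_item3}--\eqref{decomp4_item5}.

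\textbf{Proof of Lemma~\ref{decomp3}.} The key observation is that the hypothesis $Y_j-X_j\in\Fg_{j-1}$ means that, when we expand $Y_j$ in the basis $X_1,\dots,X_m$, the coefficient of $X_j$ is $1$ and the coefficients of $X_{j+1},\dots,X_m$ vanish (since $\Fg_{j-1}=\spa\{X_1,\dots,X_{j-1}\}$). Hence the change-of-basis matrix expressing $(Y_j)$ in terms of $(X_j)$ is unipotent upper-triangular, so $Y_1,\dots,Y_m$ is a basis. For the flag identity, one shows $\spa\{Y_1,\dots,Y_j\}=\spa\{X_1,\dots,X_j\}=\Fg_j$ by induction on $j$: each $Y_k$ with $k\le j$ lies in $\Fg_j$ since $Y_k=X_k+(Y_k-X_k)\in\Fg_k\subseteq\Fg_j$; and conversely $X_j=Y_j-(Y_j-X_j)$ with $Y_j-X_j\in\Fg_{j-1}=\spa\{Y_1,\dots,Y_{j-1}\}$ by the inductive hypothesis, so $X_j\in\spa\{Y_1,\dots,Y_j\}$, giving the reverse inclusion by downward closure.

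\textbf{Proof of Theorem~\ref{decomp4}.} For existence and uniqueness of $\beta(\Wg)$: conditions \eqref{decomp4_item1}--\eqref{decomp4_item2} prescribe $Y_j$ outright for $j\in e$, and for $i\in\complement e$ require $Y_i\in X_i+(\Ug_e\cap\Fg_{i-1})$. By Proposition~\ref{Gr1}\eqref{Gr1_item12}, for $i\in\complement e$ one has $\Wg+\Ug_{i-1}=\Wg\dotplus(\Ug_e\cap\Ug_{i-1})$, and since $X_i\in\Fg_i\subseteq\Wg+\Fg_{i-1}$ -- wait, more directly: $X_i\in\Ug$ decomposes uniquely along $\Wg\dotplus\Ug_e=\Ug$ (Proposition~\ref{Gr1}\eqref{Gr1_item10}), and one checks using $X_i\in\Fg_i$ together with Proposition~\ref{Gr1}\eqref{Gr1_item12} that the $\Ug_e$-component of $X_i$ lies in $\Ug_e\cap\Fg_{i-1}$; then $Y_i$ is forced to be the $\Wg$-component of $X_i$, proving uniqueness, and this choice manifestly satisfies \eqref{decomp4_item2}, proving existence. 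Note $Y_i\in\Wg$ for every $i\in\complement e$.

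Now \eqref{decomp4_item3} follows from Lemma~\ref{decomp3}, because $Y_j-X_j\in\Fg_{j-1}$ for all $j$: this is trivial for $j\in e$ and is \eqref{decomp4_item2} (together with $\Ug_e\cap\Fg_{i-1}\subseteq\Fg_{i-1}$) for $i\in\complement e$; in particular $\beta(\Wg)$ is a basis of $\Ug$. For \eqref{decomp4_item4}: the vectors $\{Y_i\mid i\in\complement e\}$ are $|\complement e|=\dim\Wg$ in number (using $|e|=\dim(\Ug/\Wg)$ from Proposition~\ref{Gr1}), they lie in $\Wg$ as noted, and they are linearly independent since $\beta(\Wg)$ is a basis; hence they form a basis of $\Wg$. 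The main obstacle is \eqref{decomp4_item5}, the real-analytic extension. The point is that the defining recursion expresses each $Y_i$ ($i\in\complement e$) as $Y_i=E(\Wg)X_i$ where $E(\Wg)$ is the linear oblique projection of $\Ug$ onto $\Wg$ along $\Ug_e$ -- indeed $Y_i$ is by construction the $\Wg$-component of $X_i$ in the decomposition $\Wg\dotplus\Ug_e=\Ug$. This formula makes sense verbatim for every $\Wg\in\Gr_{\Ug_e}(\Ug)=\{\Wg\mid\Wg\dotplus\Ug_e=\Ug\}$, which is open in $\Gr(\Ug)$, and there $\Wg\mapsto E(\Wg)$ is real analytic by Theorem~\ref{lin0} (applied with $\Ug_0:=\Ug_e$; note the roles of the two summands are as in that theorem up to renaming). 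Since $Y_j=X_j$ is constant for $j\in e$, the map $\beta$ is real analytic on $\Gr_{\Ug_e}(\Ug)$, and it restricts to the map defined by \eqref{decomp4_item1}--\eqref{decomp4_item2} on the Schubert cell $\Gr_{{\Fg_\bullet},e}(\Ug)\subseteq\Gr_{\Ug_e}(\Ug)$ (the inclusion holds by Proposition~\ref{Gr1}\eqref{Gr1_item10}). This completes the proof.
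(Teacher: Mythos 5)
Your proof of Lemma~\ref{decomp3} is correct and follows essentially the same route as the paper: both arguments observe that the transition from $(X_j)$ to $(Y_j)$ is given by a unipotent upper-triangular matrix with respect to the flag-adapted basis, hence invertible. You additionally spell out the flag identity $\Fg_j=\spa\{Y_1,\dots,Y_j\}$ by an explicit induction, which the paper leaves as an immediate consequence of the upper-triangular structure of $T$.
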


\begin{proof}
	Since $X_1,\dots,X_m$ is a basis of $\Ug$, there exists a unique linear operator 
	$T\colon\Ug\to\Ug$ satisfying $T(X_j)=Y_j$ for $j=1,\dots,m$. 
	The hypothesis is equivalent to $(T-\1)(X_j)\in\Ug_{j-1}$ for $j=1,\dots,m$, 
	and this implies that $T-\1$ is given by a strictly upper triangular matrix with respect to the basis $X_1,\dots,X_m$. 
	Therefore $T$ is invertible, and then the vectors $T(X_j)=Y_j$ for $j=1,\dots,m$ 
	form a basis of~$\Ug$. 
\end{proof}

\begin{proof}[Proof of Theorem~\ref{decomp4}]
	For $j\in e$ we define $Y_j:=X_j$. 
	On the other hand, if $i\in\complement e$, 
	then by the definition of $e$ and
	Proposition~\ref{Gr1}\eqref{Gr1_item12}
	we have 
	$$X_i\in\Wg+\Fg_{i-1}=\Wg\dotplus(\Ug_e\cap\Fg_{i-1})$$ 
	hence there exists a unique vector $Y_i\in\Wg$ with $Y_i-X_i\in\Ug_e\cap\Fg_{i-1}$. 
	Since $Y_i-X_i\in \Ug_e$, we have that 
	$Y_i= X_i-E(\Wg) X_i$.

	We now prove that the set $Y_1,\dots,Y_m$ obtained in this way 
	is a basis of $\Ug$ that 
	also satisfies conditions \eqref{decomp4_item3}--\eqref{decomp4_item4} from the statement. 
	In fact, since $Y_j-X_j\in\Fg_{j-1}$ for $j=1,\dots,m$, it follows by Lemma~\ref{decomp3} 
	that condition~\eqref{decomp4_item3} is satisfied. 
	Moreover, $\{Y_i\mid i\in \complement e\}$ is a linearly independent subset of $\Wg$ 
	whose cardinal is $\vert \complement e\vert=m-\vert e\vert=\dim\Wg$, 
	where the latter equality follows by 
	Proposition~\ref{Gr1}\eqref{Gr1_item10}.  
	Thus condition~\eqref{decomp4_item4} is also satisfied. 
	
	Finally, the map $\beta= (\beta_1, \dots, \beta_m) \colon \Gr_{\Ug_e}(\Ug) \to \Ug^m$, 
	$$ 	\beta_i (\Wg)=
	\begin{cases}
	X_i & \text{if } i\in e, \\
	X_i -E(\Wg) X_i & \text{if } i \in \complement e,
	\end{cases}
	$$
	for $i=1, \dots, m$,  is real analytic by Theorem~\ref{lin0}, and 
	 assertion~\eqref{decomp4_item5} follows directly.
\end{proof}

\section{Nilpotent Lie algebras and groups}\label{nilp}

In this paper, by \textit{nilpotent Lie algebra} we mean a finite-dimensional real vector space $\gg$ endowed with a bilinear map $[\cdot,\cdot]\colon\gg\times\gg\to\gg$ satisfying 
$$[X,Y]=-[Y,X], \ [[X,Y],Z]+[[Y,Z],X]+[[Z,X],Y]=0\text{ and }(\ad_\gg X)^m=0$$ 
for all $X,Y,Z\in\gg$, where $m:=\dim\gg$ and the linear map $\ad_{\gg} X\colon \gg\to\gg$ is defined by $\ad_{\gg}X:=[X,\cdot\,]$. 

The Baker-Campbell-Hausdorff series of $\gg$ 
is defined for arbitrary  $X,Y\in\gg$ by 
\begin{equation}\label{BCH_def_eq1}
X\cdot Y=\sum_{n\ge 1} C_n(X,Y)
\end{equation}
where for $n=1,2,\dots$ we use the notations 
$$C_n(X,Y)=\sum_{k\ge 1} \frac{(-1)^{k-1}}{k} 
\sum_{\stackrel{\scriptstyle p_1+q_1+\cdots+p_k+q_k=n}{\scriptstyle (p_1+q_1)\cdots(p_k+q_k)>0}}
\frac{1}{p_1!q_1!\cdots p_k!q_k!n}\, 
C_{p_1,q_1,\dots,p_k,q_k}(X,Y)
$$
and 
$$C_{p_1,q_1,\dots,p_k,q_k}(X,Y)=
\begin{cases}
(\ad_{\gg}X)^{p_1}(\ad_{\gg}Y)^{q_1}\cdots (\ad_{\gg}X)^{p_k}(\ad_{\gg}Y)^{q_k-1}Y 
&\text{if }q_k\ge1,\\
(\ad_{\gg}X)^{p_1}(\ad_{\gg}Y)^{q_1}\cdots
(\ad_{\gg}X)^{p_k-1}X &\text{if }q_k=0,
\end{cases}$$
whenever $0\le p_1,q_1,\dots,p_k,q_k\in\ZZ$ and $(p_1+q_1)\cdots(p_k+q_k)>0$. 
One has $C_n(X,Y)=0$ whenever $X,Y\in\gg$ and $n\ge \dim\gg$, 
hence the series~\eqref{BCH_def_eq1} actually defines 
a $\gg$-valued polynomial function on $\gg\times\gg$. 
One can check that the formula~\eqref{BCH_def_eq1} defines a group structure on $\gg$, and the corresponding group $G:=(\gg,\cdot)$ will be called here the nilpotent Lie group associated to the nilpotent Lie algebra~$\gg$. 
(See for instance \cite{CG90} and \cite{BB15a} for more details.)

Equivalently, one can define a nilpotent Lie group as a pair $G=(\gg,\cdot)$ consisting of a finite-dimensional real vector space $\gg$ and a group structure $\gg\times\gg\to\gg$, $(X,Y)\mapsto X\cdot Y$, which is a polynomial map and satisfyies $(tX)\cdot (sX)=(t+s)X$ for all $t,s\in\RR$ and $X\in\gg$. 
Defining 
\begin{equation}
\label{BCH_def_eq2}
[X,Y]:=\frac{\partial^2}{\partial t\partial s}\Big\vert_{t=s=0}(tX)\cdot(sY)\cdot(-tX)
\end{equation}
one obtains a map $[\cdot,\cdot]\colon\gg\times\gg\to\gg$ that turns $\gg$ into a nilpotent Lie algebra whose corresponding Lie group is $G=(\gg,\cdot)$ and \eqref{BCH_def_eq1} holds true. 

\begin{remark}
\normalfont
It is clear from \eqref{BCH_def_eq1}--\eqref{BCH_def_eq2} that 
$G=(\gg,\cdot)$ is an abelian group if and only if 
$[\cdot,\cdot]=0$ (and then we say that $\gg$ is an abelian Lie algebra), 
and this is further equivalent to $X\cdot Y=X+Y$ for all $X,Y\in\gg$. 
If this is the case, then $\gg$ and $G$ are nothing else than a finite-dimensional real vector space. 
For this reason we regard the nilpotent Lie algebras and groups as noncommutative generalizations of vector spaces. 
\end{remark}

\subsection*{Application of Theorem~\ref{decomp4} to nilpotent Lie algebras}

\begin{remark}\label{decomp5}
\normalfont
Let $\gg$ be a nilpotent Lie algebra. 
We denote by $\JH(\gg)$ the set of all Jordan-H\"older bases of $\gg$, 
that is, the bases $(X_1,\dots,X_m)\in\gg^m$ for which, denoting 
$\gg_k:=\spa\{X_j\mid 1\le j\le k\}$, one has $[\gg,\gg_k]\subseteq\gg_{k-1}$ for $k=1,\dots,m$, where $\gg_0:=\{0\}$. 

Let $(X_1,\dots,X_m)\in\JH(\gg)$, 
and $a=\{i_1,\dots,i_s\}\subseteq\{1,\dots,m\}$ with $i_1<\cdots<i_s$.  
If $\gg_a:=\spa\{X_i\mid i\in a\}$ is a subalgebra of $\gg$, then 
$[\gg_a,\gg_a]\subseteq\gg_a$, and then it easily follows that 
$X_{i_1},\dots,X_{i_s}$ is a Jordan-H\"older basis of~$\gg_a$. 
\end{remark}

\begin{corollary}\label{decomp6}
Let $\gg$ be a nilpotent Lie algebra    
and $(X_1,\dots,X_m)\in\JH(\gg)$.   
Then for every $e\subseteq\{1,\dots,m\}$  there exists a unique map 
 $\beta\colon \Gr_{{\Fg_\bullet},e}(\gg)\to\gg^m$
satisfying the following conditions for every $\hg\in\Gr_{{\Fg_\bullet},e}(\gg)$, 
with $\beta(\hg)=:(Y_1,\dots,Y_m)\in\gg^m $: 
\begin{enumerate}[{\rm(i)}]
	\item\label{decomp6_item1} 
	If $j\in e$ then $Y_j=X_j$. 
	\item\label{decomp6_item2} 
	If $i\in\complement e$ then $Y_i-X_i\in\gg_e\cap\Fg_{i-1}$. 
\end{enumerate}
Moreover, $\beta(\hg)$ is a basis of $\gg$ having the following properties: 
\begin{enumerate}[{\rm(i)}]
	\setcounter{enumi}{2}
	\item\label{decomp6_item3} 
	One has $\Fg_j=\spa\{Y_1,\dots,Y_j\}$ for $j=1,\dots,m$. 
	\item\label{decomp6_item4} 
	The set $\{Y_i\mid i\in \complement e\}$ is a basis of $\hg$. 
	\item\label{decomp6_item5}  The mapping 
	$\beta\colon \Gr_{{\Fg_\bullet},e}(\gg)\to\gg^m$, $\Wg\mapsto\beta(\Wg)$,
	can be extended to a real analytic mapping on the open subset $\Gr_{\gg_e}(\gg)$ of $\Gr(\gg)$.
\item\label{decomp6_item6} One has $\beta(\hg)\in\JH(\gg)$. 
\item\label{decomp6_item7} If $\hg$ is a subalgebra of $\gg$ then $(Y_{i_1},\dots,Y_{i_s})\in\JH(\hg)$, 
where we have denoted 
$\complement e=:\{i_1<\cdots<i_s\}$. 
\end{enumerate}
\end{corollary}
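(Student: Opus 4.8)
The plan is to derive Corollary~\ref{decomp6} from Theorem~\ref{decomp4} by applying it to the underlying vector space $\Ug=\gg$ with the same basis $X_1,\dots,X_m$ and the flag ${\Fg_\bullet}$ determined by the Jordan-H\"older basis, namely $\Fg_j=\spa\{X_1,\dots,X_j\}=\gg_j$. Since $(X_1,\dots,X_m)\in\JH(\gg)$, this is indeed a complete flag, so Theorem~\ref{decomp4} applies verbatim and immediately yields the existence and uniqueness of $\beta$ together with items \eqref{decomp6_item1}--\eqref{decomp6_item5}; these are literally the conclusions \eqref{decomp4_item1}--\eqref{decomp4_item5} transcribed into the Lie-algebra notation. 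So the only genuinely new content is items \eqref{decomp6_item6} and \eqref{decomp6_item7}.

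For \eqref{decomp6_item6}, I would invoke Theorem~\ref{decomp4}\eqref{decomp4_item3}: the vectors $Y_1,\dots,Y_m$ satisfy $\spa\{Y_1,\dots,Y_j\}=\Fg_j=\gg_j$ for all $j$. Hence the Jordan-H\"older condition for $(Y_1,\dots,Y_m)$, which reads $[\gg,\spa\{Y_1,\dots,Y_j\}]\subseteq\spa\{Y_1,\dots,Y_{j-1}\}$, is exactly the condition $[\gg,\gg_j]\subseteq\gg_{j-1}$, already known because $(X_1,\dots,X_m)\in\JH(\gg)$. This gives $\beta(\hg)\in\JH(\gg)$ with essentially no computation.

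For \eqref{decomp6_item7}, suppose $\hg$ is a subalgebra. By \eqref{decomp6_item4} the family $(Y_{i_1},\dots,Y_{i_s})$ (with $\complement e=\{i_1<\cdots<i_s\}$) is a basis of $\hg$. I would then appeal to the argument already recorded in Remark~\ref{decomp5}: since $\beta(\hg)\in\JH(\gg)$ by \eqref{decomp6_item6} and $\hg=\gg_{\complement e}'$ in the sense of being spanned by a subset of this Jordan-H\"older basis, the subalgebra property $[\hg,\hg]\subseteq\hg$ forces the subbasis $(Y_{i_1},\dots,Y_{i_s})$ to be a Jordan-H\"older basis of $\hg$. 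Concretely, set $\hg_t:=\spa\{Y_{i_1},\dots,Y_{i_t}\}$; one has $\hg_t=\hg\cap\gg_{i_t}$ because the $Y$'s form a Jordan-H\"older basis of $\gg$ adapted to the flag $(\gg_j)$, and then $[\hg,\hg_t]\subseteq[\gg,\gg_{i_t}]\cap\hg\subseteq\gg_{i_t-1}\cap\hg=\hg_{t-1}$, using the flag property and that $[\hg,\hg_t]\subseteq\hg$. This is precisely the claim.

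\textbf{Main obstacle.} There is no serious obstacle: the corollary is a routine specialization of Theorem~\ref{decomp4}, and the two new items \eqref{decomp6_item6}--\eqref{decomp6_item7} follow from the already-established item \eqref{decomp4_item3} on flag adaptation. The only point requiring a little care is the identification $\spa\{Y_{i_1},\dots,Y_{i_t}\}=\hg\cap\gg_{i_t}$ used in the proof of \eqref{decomp6_item7}; this is a dimension count combined with the inclusion $Y_{i_t}\in\gg_{i_t}$ (which follows from $Y_{i_t}-X_{i_t}\in\Fg_{i_t-1}$ when $i_t\in\complement e$) and $\hg=\spa\{Y_i\mid i\in\complement e\}$, and is handled exactly as in Remark~\ref{decomp5}.
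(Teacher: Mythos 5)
Your proposal is correct and takes essentially the same route as the paper: items \eqref{decomp6_item1}--\eqref{decomp6_item5} are read off from Theorem~\ref{decomp4}, item \eqref{decomp6_item6} follows because the flag adaptation in Theorem~\ref{decomp4}\eqref{decomp4_item3} gives $\spa\{Y_1,\dots,Y_j\}=\gg_j$, and item \eqref{decomp6_item7} is Remark~\ref{decomp5} applied to the Jordan--H\"older basis $\beta(\hg)$. The extra detail you supply (the identification $\spa\{Y_{i_1},\dots,Y_{i_t}\}=\hg\cap\gg_{i_t}$ via flag adaptation and a dimension count) is exactly the ``easily follows'' argument that Remark~\ref{decomp5} leaves implicit.
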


\begin{proof}
Assertions \eqref{decomp6_item1}--\eqref{decomp6_item5} follow by Theorem~\ref{decomp4} for $\Ug=\gg$ and $\Wg=\hg$. 
	
Since $(X_1,\dots,X_m)\in\JH(\gg)$, it follows by \eqref{decomp4_item3} that 
$(Y_1,\dots,Y_m)\in\JH(\gg)$. 

Moreover, we have $\hg=\spa\{Y_{i_1},\dots,Y_{i_s}\}$ by Theorem~\ref{decomp4}\eqref{decomp4_item4}, 
hence Remark~\ref{decomp5} shows that Assertion~\eqref{decomp6_item7} holds true. 
\end{proof}

\begin{remark}
\normalfont 
The above Corollary~\ref{decomp6} 
 contains some results from \cite{Co98}, 
which are sufficiently general and precise for the applications we wish to make in this paper. 
\end{remark}

\section{Nonlinear oblique projections in nilpotent Lie algebras}\label{nonlin}

In this section we establish our main result on nonlinear oblique projections (Theorem~\ref{of}). 
Here $\gg$ is a nilpotent Lie algebra with a fixed Jordan-H\"older sequence
\begin{equation}\label{JH-seq}
\Fg_\bullet:\quad \{0\}=\Fg_0\subseteq\Fg_1\subseteq\cdots\subseteq\Fg_m=\gg.
\end{equation}
We denote 
$$\JH_{\Fg_\bullet}(\gg):=(\Fg_1\setminus\Fg_0)
\times(\Fg_2\setminus\Fg_1)\times\cdots\times(\Fg_m\setminus\Fg_{m-1})\subseteq(\gg\setminus\{0\})^m.$$
It is easily seen that $\JH_{\Fg_\bullet}(\gg)$ is exactly the set of all Jordan-H\"older bases of $\gg$ 
which are compatible with the above Jordan-H\"older sequence, that is, the $m$-tuples $\underline{X}=(X_1,\dots,X_m)$ 
satisfying $\Fg_k=\spa\{X_j\mid 1\le j\le k\}$ for $k=1,\dots,m$. 
See also Lemma~\ref{decomp3}.

The following lemma is a generalization of a known result. 
(Compare for instance \cite[Prop. 1.2.7]{CG90} and \cite[Prop. 5.2.6]{FuLu15}.)  
The point here is that we do not only establish factorizations that 
involve arbitrary partitions of Jordan-H\"older bases, but we  also take into account dependence 
on bases that are compatible with a fixed Jordan-H\"older sequence. 
We need these enhanced features in order to obtain analyticity of nonlinear oblique projections in Theorem~\ref{of}. 

\begin{lemma}\label{block}
Fix any partition $\{1,\dots,m\}=A_1\sqcup\cdots\sqcup A_k$ and define the polynomial map 
$$\Phi\colon\RR^m\times \gg^m \to\gg,\quad 
\Phi(t_1,\dots,t_m,\underline{X}):=\Bigl(\sum_{j\in A_1}t_j X_j\Bigr)\cdots\Bigl(\sum_{j\in A_k}t_j X_j\Bigr).$$
Then for $j=1,\dots,m$ there exists a polynomial function 
$P_j\colon \RR^{m-j}\times \JH_{\Fg_\bullet}(\gg)\to\RR$
with  
$$
\Phi(t_1,\dots,t_m,\underline{X})=\sum_{j=1}^m(t_j+P_j(t_{j+1},\dots,t_m,\underline{X}))X_j$$
for all $t_1,\dots,t_m\in\RR$ and $\underline{X}=(X_1,\dots,X_m)\in\JH_{\Fg_\bullet}(\gg)$. 
Moreover, for every $\underline{X}\in\JH_{\Fg_\bullet}(\gg)$, 
the map $\Phi(\cdot,\underline{X})$ is a polynomial diffeomorphism $\RR^m\to\gg$, 
and its inverse map defines a polynomial function 
$\gg\times \JH_{\Fg_\bullet}(\gg)\to\RR^m$, $(Y,\underline{X})\mapsto \Phi(\cdot,\underline{X})^{-1}(Y)$. 
\end{lemma}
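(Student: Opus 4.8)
The plan is to prove the two displayed claims — the triangular normal form for $\Phi$ and the polynomial-diffeomorphism statement — essentially simultaneously, by induction on the number $m$ of basis vectors, exploiting the Jordan-H\"older filtration. First I would record the key structural fact about the Baker-Campbell-Hausdorff multiplication on $\gg$ relative to $\underline{X}=(X_1,\dots,X_m)\in\JH_{\Fg_\bullet}(\gg)$: writing a point of $\gg$ in the coordinates $(s_1,\dots,s_m)\mapsto\sum_j s_j X_j$, the product $\bigl(\sum_j s_j X_j\bigr)\cdot\bigl(\sum_j s_j' X_j\bigr)$ has $j$-th coordinate equal to $s_j+s_j'+Q_j$, where $Q_j$ is a polynomial in those coordinates $s_i,s_i'$ with $i>j$ only. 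This is immediate from \eqref{BCH_def_eq1}: every term $C_n(X,Y)$ with $n\ge 2$ is a sum of iterated brackets of $X=\sum s_i X_i$ and $Y=\sum s_i' X_i$, and since $[\gg,\Fg_k]\subseteq\Fg_{k-1}$ for a Jordan-H\"older basis, any nonzero bracket of length $\ge 2$ lands in $\Fg_{m-1}$, and more precisely a bracket that produces a nonzero $X_j$-component must have been built from vectors $X_i$ with $i>j$ (each bracket strictly lowers the filtration index). The linear term contributes exactly $s_j+s_j'$ in the $j$-th slot. I would isolate this as a short lemma or sublemma, because both halves of the statement follow from it by bookkeeping.

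Given that fact, the normal form is a direct computation. I would argue by induction on $k$ (the number of blocks). For $k=1$ the claim is trivial with $P_j=0$. For the inductive step, one multiplies the already-reduced product $\bigl(\sum_{i<k}\text{block}_i\bigr)=\sum_j(t_j+R_j)X_j$, with $R_j$ depending only on $t_{j+1},\dots,t_m$ and $\underline X$, by $\sum_{j\in A_k}t_j X_j$; by the structural fact the new $j$-th coordinate is $(t_j+R_j)+[j\in A_k]t_j+Q_j$, and $Q_j$ depends only on coordinates indexed $>j$, hence only on $t_{j+1},\dots,t_m$ (and $\underline X$); collecting terms gives the new $P_j$. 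One must only check the harmless point that if the partition assigns index $j$ to the $r$-th block, then reading the product left to right the coordinate $t_j$ enters linearly exactly once (when that block is multiplied in) and never again, because all later contributions to the $j$-th slot come through brackets involving strictly higher indices. I would also note that $P_j$ is manifestly polynomial in $t_{j+1},\dots,t_m$ and polynomial in the entries of $\underline X\in\JH_{\Fg_\bullet}(\gg)$, since the BCH coefficients are universal rational constants and the iterated-bracket expressions are polynomial in the $X_i$'s.

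For the diffeomorphism claim, fix $\underline X\in\JH_{\Fg_\bullet}(\gg)$ and consider $\psi:=\Phi(\cdot,\underline X)\colon\RR^m\to\gg\cong\RR^m$ in the coordinates above. By the normal form, $\psi(t_1,\dots,t_m)$ has $j$-th component $t_j+P_j(t_{j+1},\dots,t_m,\underline X)$, i.e.\ $\psi=\id+N$ where $N$ is ``strictly upper triangular'' in the sense that its $j$-th component involves only the later variables. Such a map is inverted by back-substitution from $j=m$ downward: $t_m=y_m$, then $t_{m-1}=y_{m-1}-P_{m-1}(t_m,\underline X)$, and in general $t_j=y_j-P_j(t_{j+1},\dots,t_m,\underline X)$ with the already-computed values substituted; each step is a polynomial substitution, so $\psi^{-1}$ is a polynomial map, and plainly $\psi\circ\psi^{-1}=\psi^{-1}\circ\psi=\id$. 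Moreover every coefficient appearing is a polynomial in the entries of $\underline X$, uniformly over $\JH_{\Fg_\bullet}(\gg)$, so $(Y,\underline X)\mapsto\Phi(\cdot,\underline X)^{-1}(Y)$ is a polynomial function on $\gg\times\JH_{\Fg_\bullet}(\gg)$; translating back out of the coordinate identification $\gg\cong\RR^m$, which is itself linear and polynomial in $\underline X$, does not disturb this. The one place demanding a little care — the main obstacle, such as it is — is the verification of the structural fact that every length-$\ge 2$ BCH bracket contributing to the $X_j$-slot is assembled only from $X_i$ with $i>j$; this is where the Jordan-H\"older condition $[\gg,\Fg_k]\subseteq\Fg_{k-1}$ must be used correctly, and one should phrase it as: each application of $\ad_\gg(\cdot)$ sends $\Fg_k$ into $\Fg_{k-1}$, so an iterated bracket of vectors lying in $\Fg_{k_1},\dots,\Fg_{k_\ell}$ (with $\ell\ge 2$ factors) lies in $\Fg_{\,(\min_i k_i)-1}$, whence its $X_j$-component can be nonzero only if some factor sits in $\Fg_{k}$ with $k>j$; together with $\underline X\in\JH_{\Fg_\bullet}(\gg)$ meaning $X_i\in\Fg_i\setminus\Fg_{i-1}$, this forces the asserted index restriction. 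Everything else is routine triangular algebra.
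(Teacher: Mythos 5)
Your argument is correct in substance, but it takes a genuinely different route from the paper's. You extract a ``triangularity'' lemma directly from the Baker--Campbell--Hausdorff series: in coordinates attached to a Jordan--H\"older basis, every iterated bracket with at least two factors can contribute to the $X_j$-slot only through indices strictly larger than $j$ (because $[\gg,\Fg_k]\subseteq\Fg_{k-1}$, and hence, using antisymmetry of the bracket, an iterated bracket of $\ell\ge 2$ factors lying in $\Fg_{k_1},\dots,\Fg_{k_\ell}$ lands in $\Fg_{\min_i k_i - 1}$). You then assemble the normal form for $\Phi$ by induction on the number $k$ of blocks, and invert by back-substitution. The paper instead proceeds by induction on $m=\dim\gg$: it quotients by the central line $\gg_1=\RR X_1$, applies the inductive hypothesis to the $(m-1)$-dimensional quotient (with the index $1$ deleted from its block), lifts the resulting normal form modulo $X_1$, and then uses centrality of $X_1$ to reinstate the $t_1X_1$ factor additively; no explicit BCH triangularity lemma is ever stated, since the quotient construction does that bookkeeping implicitly. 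Both approaches rest on the same filtration property and yield the same back-substitution argument for the inverse; yours is arguably more ``hands-on'' while the paper's is structurally tidier. One small point to tighten in your write-up: in the inductive step on $k$, the partial product over the first $k-1$ blocks should not be written as $\sum_j(t_j+R_j)X_j$, since for $j\in A_k$ the variable $t_j$ has not yet entered and its coefficient must be $0$, not $1$; the correct form is $\sum_j\bigl([\,j\notin A_k\,]\,t_j+R_j\bigr)X_j$, after which multiplication by $\sum_{j\in A_k}t_jX_j$ supplies the missing linear term exactly once. You do flag this as the ``harmless point,'' but the displayed formula for the partial product should be corrected to match, so that the coefficient of $t_j$ stays $1$ rather than doubling.
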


Here we use the convention that if $A_r=\emptyset$ then 
$\sum\limits_{j\in A_r}t_j X_j:=0\in\gg$. 
We also note that the polynomial function $P_m$ is constant. 

\begin{proof}
We proceed by induction on $m$. 
The case $m=1$ is clear. 
Let us assume that the assertion holds true for all nilpotent Lie algebras of dimension~$<m$. 

Let $k_1\in\{1,\dots,k\}$ with $1\in A_{k_1}$. 
Define $\widetilde{A_j}:=A_j$ if $j\in\{1,\dots,k\}\setminus\{k_1\}$ 
and $\widetilde{A_{k_1}}:=A_{k_1}\setminus\{1\}$. 

Since $(X_1,\dots,X_m)\in\JH(\gg)$, it follows that $\gg_1:=\RR X_1$ is contained in the center of~$\gg$. 
Define $\widetilde{\gg}:=\gg/\gg_1$ with its Jordan-H\"older sequence 
$$\widetilde{\Fg}_\bullet:\quad\{0\}\subseteq\gg_2/\gg_1\subseteq\cdots\subseteq\gg_m/\gg_1=\widetilde{\gg}$$ 
and $\pi\colon\gg\to\widetilde{\gg}$, $\pi(X):=X+\gg_1$. 
Then $\widetilde{X}:=(\pi(X_2),\dots,\pi(X_m))
\in\JH_{\widetilde{\Fg}_\bullet}(\widetilde{\gg})$ 
and one has the partition $\{2,\dots,m\}=\widetilde{A_1}\sqcup\cdots\sqcup \widetilde{A_k}$, 
hence we can define the corresponding map 
$\widetilde{\Phi}\colon\RR^{m-1}\times \widetilde{\gg}^{m-1}\to\widetilde{\gg}$ 
by 
$$\widetilde{\Phi}(t_2,\dots,t_m,\underline{Y}):=\Bigl(\sum_{j\in A_1}t_j Y_j\Bigr)\cdots
\Bigl(\sum_{j\in A_{k_1}\setminus\{1\}}t_j Y_j\Bigr)\cdots
\Bigl(\sum_{j\in A_k}t_j Y_j\Bigr), $$
for $\underline{Y}=(Y_2, \dots, Y_m)\in \widetilde{\gg}^{m-1}$.
It follows by the induction hypothesis that 
\begin{equation}\label{block_proof_eq1}
\widetilde{\Phi}(t_2,\dots,t_m,\widetilde{X})=\sum_{j=2}^m(t_j+P_j(t_{j+1},\dots,t_m,\widetilde{X}))\pi(X_j)
\end{equation}
for suitable polynomial functions $P_j\colon\RR^{m-j}\times\JH_{\widetilde{\Fg}_\bullet}(\widetilde{\gg})\to\RR$. 

On the other hand, 
denoting 
$$ \Psi(t_2,\dots,t_m,\underline{X}):=\Bigl(\sum_{j\in A_1}t_j X_j\Bigr)\cdots
\Bigl(\sum_{j\in A_{k_1}\setminus\{1\}}t_j X_j\Bigr)\cdots
\Bigl(\sum_{j\in A_k}t_j X_j\Bigr)$$
one has 
$$\widetilde{\Phi}(t_2,\dots,t_m,\widetilde{X})=\pi\Bigl(\Psi(t_2,\dots,t_m,\underline{X})\Bigr) $$
and it follows by \eqref{block_proof_eq1} that there exists a polynomial $P_1\colon\RR^{m-1}\times\JH_{\Fg_\bullet}(\gg)\to\RR$ with 
$$\Psi(t_2,\dots,t_m,\underline{X})=P_1(t_2,\dots,t_m,\underline{X})X_1+
\sum_{j=2}^m(t_j+P_j(t_{j+1},\dots,t_m,\widetilde{X}))X_j.$$
Now recall that $X_1$ belongs to the center of $\gg$, 
hence $X_1\cdot Y=X_1+Y$ for every $Y\in\gg$. 
This implies that 
$$\begin{aligned}
\Phi(t_1,\dots,t_m,\underline{X})
&=t_1X_1+\Psi(t_2,\dots,t_m,\underline{X}) \\
&=(t_1+P_1(t_2,\dots,t_m,\underline{X}))X_1+
\sum_{j=2}^m(t_j+P_j(t_{j+1},\dots,t_m,\widetilde{X}))X_j,
\end{aligned}$$
which completes the induction step. 

Using the formula thus established for $\Phi$, it is straightforward to prove that 
$\Phi$ is a polynomial diffeomorphism whose inverse map is also polynomial, 
and this completes the proof. 
\end{proof}

\begin{remark}
\normalfont
The set  $\Gr^{\alg}(\gg)$ of all subalgebras of $\gg$ is a Zariski closed subset of the manifold $\Gr(\gg)$.
If $\dim\gg=m$, then we define 
$$\Gr^{\alg}(\gg,k):=\Gr^{\alg}(\gg)\cap\Gr(\gg,k)
\text{ for }k=1,\dots,m.$$
For any Jordan-H\"older sequence \eqref{JH-seq} and any $e\subseteq\{1,\dots,m\}$ we also define 
$$\Gr_{{\Fg_\bullet},e}^{\alg}(\gg)
:=\Gr^{\alg}(\gg)\cap\Gr_{{\Fg_\bullet},e}(\gg)
=\{\hg\in\Gr^{\alg}(\gg)\mid\jump_{\Fg_\bullet}(\hg)=e\}.$$
\end{remark}

\begin{theorem}\label{of}
Let $(V_1,\dots,V_m)\in\JH_{\Fg_\bullet}(\gg)$, 
$e\subseteq\{1,\dots,m\}$ be fixed and 
and define $\gg_e:=\spa\{V_j\mid j\in e\}$. 
For every $\hg\in\Gr_{{\Fg_\bullet},e}^{\alg}(\gg)$ the Baker-Campbell-Hausdorff multiplication defines a polynomial diffeomorphism 
$\gg_e\times\hg\to\gg$. 
Moreover,
there exist a real analytic map $\beta= (Y_1(\cdot), \dots, Y_m(\cdot))\colon \Gr_{\gg_e}(\gg)\to \gg^m 
$ and a polynomial map $p=(p_1, \dots, p_m)\colon \gg\times \JH_{\Fg_\bullet}(\gg) \to \RR^m$, such that
\begin{enumerate}[{\rm (i)}]
	\item For  $\hg\in\Gr_{{\Fg_\bullet},e}^{\alg}(\gg)$ one has $\beta(\hg)=(Y_1(\hg),\dots,Y_m(\hg))\in\JH_{\Fg_\bullet}(\gg)$.
	\item
	The function $\Pi\colon \gg\times\Gr_{\gg_e}(\gg)\to\gg_e$ given by 
$$\Pi\colon \gg\times\Gr_{{\Fg_\bullet},e}^{\alg}(\gg)\to\gg_e,\quad \Pi(Y,\hg)=\sum_{j\in e}p_j(Y,\beta(\hg))Y_j(\hg)$$
has the property that
$$(\forall X\in\gg)(\forall \hg\in\Gr_{{\Fg_\bullet},e}^{\alg}(\gg))\quad X\in\Pi(X,\hg)\cdot\hg, $$
and it is uniquely determined.
\end{enumerate}
\end{theorem}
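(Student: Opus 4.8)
The plan is to reduce the statement to the already-established machinery of Corollary~\ref{decomp6} and Lemma~\ref{block}. First I would set $(X_1,\dots,X_m):=(V_1,\dots,V_m)$, which lies in $\JH_{\Fg_\bullet}(\gg)$, and apply Corollary~\ref{decomp6} with this Jordan--H\"older basis and the given set $e$. This produces the real analytic map $\beta=(Y_1(\cdot),\dots,Y_m(\cdot))\colon\Gr_{\gg_e}(\gg)\to\gg^m$ on the open set $\Gr_{\gg_e}(\gg)$, with $\beta(\hg)\in\JH_{\Fg_\bullet}(\gg)$ for every $\hg\in\Gr_{{\Fg_\bullet},e}(\gg)$ (Corollary~\ref{decomp6}\eqref{decomp6_item6}), and $\{Y_i(\hg)\mid i\in\complement e\}$ a basis of $\hg$ while $\{Y_j(\hg)\mid j\in e\}=\{X_j\mid j\in e\}$ spans $\gg_e$. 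This immediately gives assertion~(i). Moreover, when $\hg\in\Gr_{{\Fg_\bullet},e}^{\alg}(\gg)$, Corollary~\ref{decomp6}\eqref{decomp6_item7} tells us $(Y_{i_1}(\hg),\dots,Y_{i_s}(\hg))\in\JH(\hg)$ where $\complement e=\{i_1<\cdots<i_s\}$, so $\hg$ carries a Jordan--H\"older basis coming from the global one.

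Next I would invoke Lemma~\ref{block} with the partition $\{1,\dots,m\}=e\sqcup\complement e$ (taking $k=2$, $A_1=e$, $A_2=\complement e$, say). For a fixed Jordan--H\"older basis $\underline{Y}\in\JH_{\Fg_\bullet}(\gg)$ this gives the polynomial diffeomorphism $\RR^m\to\gg$, $(t_1,\dots,t_m)\mapsto(\sum_{j\in e}t_jY_j)\cdot(\sum_{i\in\complement e}t_iY_i)$, whose inverse is polynomial in $(Y,\underline{Y})$. Composing this with the linear coordinate maps, the restriction $\gg_e\times\hg\to\gg$, $(Z,W)\mapsto Z\cdot W$, is a polynomial diffeomorphism whenever $\{Y_j\}_{j\in e}$ is a basis of $\gg_e$ and $\{Y_i\}_{i\in\complement e}$ is a basis of $\hg$ — which is exactly the situation for $\hg\in\Gr_{{\Fg_\bullet},e}^{\alg}(\gg)$ by the previous paragraph. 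This establishes the first claim of the theorem (the BCH multiplication gives a polynomial diffeomorphism $\gg_e\times\hg\to\gg$), and it also produces the polynomial map $p=(p_1,\dots,p_m)\colon\gg\times\JH_{\Fg_\bullet}(\gg)\to\RR^m$ extracting, from $(Y,\underline{Y})$, the coordinates $(t_1,\dots,t_m)$ such that $Y=(\sum_{j\in e}t_jY_j)\cdot(\sum_{i\in\complement e}t_iY_i)$. Setting $\Pi(Y,\hg):=\sum_{j\in e}p_j(Y,\beta(\hg))Y_j(\hg)\in\gg_e$, we get by construction that for $X\in\gg$ and $\hg\in\Gr_{{\Fg_\bullet},e}^{\alg}(\gg)$ one has $X=\Pi(X,\hg)\cdot\bigl(\sum_{i\in\complement e}p_i(X,\beta(\hg))Y_i(\hg)\bigr)$ with the second factor in $\hg$, hence $X\in\Pi(X,\hg)\cdot\hg$.

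For uniqueness, suppose $X\in Z\cdot\hg$ with $Z,Z'\in\gg_e$ and $X\in Z'\cdot\hg$; then $Z^{-1}\cdot Z'\in\hg$ (the inverse is computed in the group $(\gg,\cdot)$). Since $\gg_e\times\hg\to\gg$ is a bijection — in fact the factorization $\gg=\gg_e\cdot\hg$ is unique by the diffeomorphism statement just proved, equivalently $\gg_e\cap\hg=\{0\}$ by Proposition~\ref{Gr1}\eqref{Gr1_item10} applied to $\Wg=\hg$ — the element $Z^{-1}\cdot Z'\in\gg_e\cap\hg$ forces $Z^{-1}\cdot Z'=0$, i.e. $Z=Z'$. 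Thus $\Pi(X,\hg)$ is the unique element of $\gg_e$ with $X\in\Pi(X,\hg)\cdot\hg$, which is assertion~(ii).

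I expect the main obstacle to be bookkeeping rather than any conceptual difficulty: one must carefully check that the polynomial map $p$ from Lemma~\ref{block} can be taken to have $\JH_{\Fg_\bullet}(\gg)$ as the parameter domain (so that $p(Y,\beta(\hg))$ makes sense, using $\beta(\hg)\in\JH_{\Fg_\bullet}(\gg)$ for the relevant $\hg$), and that the composition $\hg\mapsto p_j(X,\beta(\hg))Y_j(\hg)$ inherits the stated real analyticity on $\Gr_{\gg_e}(\gg)$ while the stated identity only holds on the subset $\Gr_{{\Fg_\bullet},e}^{\alg}(\gg)$ where $\hg$ is genuinely a subalgebra — precisely because only then is $\{Y_i(\hg)\}_{i\in\complement e}$ a Jordan--H\"older basis of $\hg$, which is what makes the BCH factorization through $\hg$ legitimate. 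A small point worth spelling out is that $\gg_e$ is itself a subalgebra of $\gg$ (being $\gg_e=\Ug_e$ for a Jordan--H\"older basis, one checks $[\gg_e,\gg_e]\subseteq\gg_e$ directly, or it follows from Example~\ref{ex1} together with the flag being a Jordan--H\"older sequence), so that $\gg_e\times\hg\to\gg$ restricts a diffeomorphism of the type governed by Lemma~\ref{block}.
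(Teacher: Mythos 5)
Your overall route is the same as the paper's: obtain $\beta$ from Corollary~\ref{decomp6}, apply Lemma~\ref{block} to the partition $\{1,\dots,m\}=e\sqcup\complement e$ to get the polynomial diffeomorphism and the polynomial map $p$, then define $\Pi$ and argue uniqueness. The existence portion of your proof is correct and matches the paper step for step.

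However, there is a genuine error in your final paragraph and in part of your uniqueness argument. You claim that $\gg_e=\spa\{V_j\mid j\in e\}$ is a subalgebra of $\gg$, and you lean on this in two places: first when you assert $Z^{-1}\cdot Z'\in\gg_e$ for $Z,Z'\in\gg_e$ (which requires $\gg_e$ to be closed under the group law, i.e.\ to be a subalgebra), and second when you say this property is needed ``so that $\gg_e\times\hg\to\gg$ restricts a diffeomorphism of the type governed by Lemma~\ref{block}.'' Both claims are off the mark. The set $\gg_e$ need not be a subalgebra: take $\gg$ to be the $3$-dimensional Heisenberg algebra with Jordan--H\"older basis $X_1,X_2,X_3$ and $[X_3,X_2]=X_1$; for $e=\{2,3\}$ one has $\gg_e=\spa\{X_2,X_3\}$, and $[X_3,X_2]=X_1\notin\gg_e$. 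Example~\ref{ex1}, which you cite, only computes $\jump_{\Fg}(\Ug_e)=\complement e$ and says nothing about $\Ug_e$ being a subalgebra. Moreover Lemma~\ref{block} does not require the spans $\sum_{j\in A_r}t_jX_j$ to be subalgebras -- it holds for an arbitrary partition -- so your worry at the end is moot. The correct uniqueness argument (which you do mention, almost in passing) is precisely that the map $\gg_e\times\hg\to\gg$, $(Z,W)\mapsto Z\cdot W$, is already known to be a bijection because $\Phi(\cdot,\beta(\hg))$ is a polynomial diffeomorphism and $\{Y_j(\hg)\}_{j\in e}$, $\{Y_i(\hg)\}_{i\in\complement e}$ are bases of $\gg_e$, $\hg$; one then reads off $Z=\Pi(X,\hg)$ directly from the coordinate map $p$. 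No group-theoretic cancellation within $\gg_e$ is needed, and indeed none is available in general.
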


\begin{proof}
For arbitrary $\hg\in\Gr_{\gg_e}(\gg)$ we define $\beta(\hg)=(Y_1(\hg),\dots,Y_m(\hg))\in\gg^m$ via Corollary~\ref{decomp6},  
hence for  $\hg\in\Gr_{{\Fg_\bullet},e}^{\alg}(\gg)$, 
$\beta(\hg)\in\JH_{\Fg_\bullet}(\gg)$  and $\hg=\spa\{Y_i(\hg)\mid i\in\complement e\}$ and $\gg_e=\spa\{Y_j(\hg)\mid j\in e\}$ 

By Lemma~\ref{block} applied for the partition $\{1,\dots,m\}=e\sqcup\complement e$, we obtain that the map 
$$\Phi\colon\RR^m\times \JH_{\Fg_\bullet}(\gg)\to\gg,\quad 
\Phi(t_1,\dots,t_m,\underline{X}):=
\Bigl(\sum_{j\in e}t_j X_j\Bigr)\cdot\Bigl(\sum_{i\in \complement e}t_i X_i\Bigr)$$
has the property that 
for every $\underline{X}=(X_1,\dots,X_m)\in\JH_{\Fg_\bullet}(\gg)$, 
the map $\Phi(\cdot,\underline{X})$ is a polynomial diffeomorphism $\RR^m\to\gg$ 
whose inverse map gives a polynomial function 
\begin{equation}\label{of_proof_p}
p\colon \gg\times \JH_{\Fg_\bullet}(\gg)\to\RR^m, \quad 
(Y,\underline{X})\mapsto \Phi(\cdot,\underline{X})^{-1}(Y)=(p_1(Y,\underline{X}),\dots,p_m(Y,\underline{X}))
\end{equation}
hence one has the unique factorization 
\begin{equation}\label{of_proof_eq1}
Y=\Bigl(\sum_{j\in e}p_j(Y,\underline{X}) X_j\Bigr)
\cdot\Bigl(\sum_{i\in \complement e}p_i(Y,\underline{X}) X_i\Bigr)
\end{equation}
for all $Y\in\gg$ and $\underline{X}=(X_1,\dots,X_m)\in\JH(\gg)$. 

For $\hg\in\Gr_{{\Fg_\bullet},e}^{\alg}(\gg)$ and $\underline{X}=\beta(\hg)$ it thus follows 
that the Baker-Campbell-Hausdorff multiplication defines a polynomial diffeomorphism 
$\gg_e\times\hg\to\gg$. 

Using the polynomial map $p$ in \eqref{of_proof_p},
we now define the mapping
$$\Pi\colon \gg\times\Gr_{{\Fg_\bullet},e}^{\alg}(\gg)\to\gg_e,\quad \Pi(Y,\hg)=\sum_{j\in e}p_j(Y,\beta(\hg))Y_j(\hg),$$
 which satifies the condition in the statement.
Using the uniqueness of the factorization~\eqref{of_proof_eq1} and the equality $\hg=\spa\{Y_i(\hg)\mid i\in\complement e\}$, 
it follows that $\Pi(X,\hg)\in\gg_e$ is uniquely determined by the condition $X\in \Pi(X,\hg)\cdot\hg$, 
and this completes the proof. 
\end{proof}

\begin{remark}
	\normalfont
	With the notations and in the contitions of Theorem~\ref{of}, we have in fact obtained that for every $X\in \gg$, and $\hg\in\Gr_{{\Fg_\bullet},e}^{\alg}(\gg))$, $X=\Pi(X,\hg)$ $(\text{mod } \hg)$, where on $\gg$ we consider the nilpotent Lie group structure given by the Baker-Campbell-Hausdorff multiplication, such that $\hg$ becomes a subgroup of $\gg$.
	\end{remark}

\begin{example}
	\label{rank2}
	\normalfont
Let $\gg$ be a nilpotent Lie algebra with $\dim(\gg/[\gg,\gg])=2$, 
and assume that 
$${\Fg_\bullet}:\quad \{0\}=\Fg_0\subset\Fg_1\subset\cdots\subset\Fg_m=\gg$$
is a Jordan-H\"older sequence with $\Fg_{m-2}=[\gg,\gg]$. 
This implies that 
\begin{equation}
\label{rank2_eq1}
\Gr^{\alg}(\gg,m-1)=\{\hg\in\Gr(\gg,m-1)\mid \Fg_{m-2}\subset\hg\},
\end{equation}
using \cite[Lemma 1.1.8]{CG90}. 
Consequently, the map 
$$\Gr^{\alg}(\gg,m-1)\to\Gr(\gg/\Fg_{m-2},1),\quad 
\hg\mapsto \hg/\Fg_{m-2}$$
is a bijection. 
Since $\dim(\gg/\Fg_{m-2})=2$, it thus follows that 
$\Gr^{\alg}(\gg,m-1)$ is homeomorphic to the real projective line (i.e., the space of all 1-dimensional subspaces of a 2-dimensional real vector space), which is further homeomorphic to the unit circle~$\TT$. 
A more specific parameterization of $\Gr^{\alg}(\gg,m-1)$ is the 2-sheeted covering map 
$$\TT\to\Gr^{\alg}(\gg,m-1),\quad z\mapsto \hg_z$$
where we define 
$$\hg_z:=\spa(\Fg_{m-2}\cup\{(\cos\theta) X_{m-1}+(\sin\theta) X_m\}) 
\text{ for }z=\ee^{\ie\theta}\in\TT.$$ 
For any $\hg\in\Gr^{\alg}(\gg,m-1)$ one has by \eqref{rank2_eq1} along with Examples \ref{ex1}~and~\ref{ex_codim1}, 
\begin{itemize} 
	\item either $\Fg_{m-1}=\hg$, i.e., $\hg=\hg_z$ with $z=1$, 
	and then $\jump_{\Fg_\bullet}(\hg)=\{m\}$;
	\item or $\Fg_{m-1}\not\subseteq\hg$, i.e., $\hg=\hg_z$ with $z\in\TT\setminus\{1\}$, 
	and then $\jump_{\Fg_\bullet}(\hg)=\{m-1\}$. 
\end{itemize}
Thus 
$$\Gr_{{\Fg_\bullet},\{m\}}^{\alg}(\gg)=\{\hg_1\}
\text{ and }
\Gr_{{\Fg_\bullet},\{m-1\}}^{\alg}(\gg)=\{\hg_z\mid z\in\TT\setminus\{1\}\}.$$
\end{example}

We will now specialize Example~\ref{rank2} to two nilpotent Lie algebras whose coadjoint orbits have dimensions less than~2, 
which were classified in \cite{ACL95}. 

\begin{example}
	\label{filiform}
\normalfont
For an arbitrary integer $m\ge 3$ let $\gg$ be the $m$-dimensional threadlike Lie algebra, that is, 
the nilpotent Lie algebra with a basis 
$X_1,\dots,X_m$ satisfying the commutation relations 
$$[X_m,X_j]=X_{j-1}\text{ for }j=2,\dots,m-1$$ 
and $[X_k,X_j]=0$ if 
$1\le j< k\le m-2$. 

We define $\Fg_k:=\spa\{X_j\mid 1\le j\le k\}$ for $k=1,\dots,m$ and $\Fg_0:=\{0\}$. 
The center of $\gg$ is $\Fg_1=\RR X_1$, 
while $[\gg,\gg]=\Fg_{m-2}$, and thus Example~\ref{rank2} applies and we will use its notation.  

It is easily checked that $\hg_z$ is isomorphic to the $(m-1)$-dimensional threadlike Lie algebra if $z\in\TT\setminus\{1\}$. 
Moreover, one has 
$$[\hg_z,\hg_z]=
\begin{cases}
\{0\}&\text{ if }z=1,\\
\Fg_{m-3}&\text{ if }z\in\TT\setminus\{1\}.
\end{cases}$$
Therefore for every $z\in\TT$ the subalgebra $\hg_z$ is subordinated to any $\xi\in\Fg_{m-3}^\perp\subset\gg^*$. 
\end{example}

\begin{example}
	\label{N5N4}
	\normalfont 
	Let $\gg$ be the nilpotent Lie algebra 
	with a basis $X_1,X_2,X_3,X_4,X_5$ satisfying the commutation relations
$$[X_5,X_4]=X_3,\ [X_5,X_3]=X_2,\ [X_4,X_3]=X_1.$$
We define $\Fg_k:=\spa\{X_j\mid 1\le j\le k\}$ for $k=1,\dots,5$ and $\Fg_0:=\{0\}$. 

It is clear that 
the center of $\gg$ is $\Fg_2=\spa\{X_1,X_2\}$. 
Moreover $[\gg,\gg]=\Fg_3=\spa\{X_1,X_2,X_3\}$, 
and thus Example~\ref{rank2} applies (with $m=5$) and we will use its notation.  

For $z=\ee^{\ie\theta}\in\TT$, denoting $X(z):=(\cos\theta) X_1+(\sin\theta) X_2\in\Fg_2$, one has 
$$[(\cos\theta) X_4+(\sin\theta) X_5,X_3]
=(\cos\theta) X_1+(\sin\theta) X_2=X(z)$$
and then it follows that $\hg_z$ is isomorphic to the direct product of a 3-dimensional Heisenberg algebra and a 1-dimensional Lie algebra. 
In particular, $\hg_{z_1}$ is isomorphic to $\hg_{z_2}$ for all $z_1,z_2\in\TT$, unlike Example~\ref{filiform}. 
Moreover, one has 
$$[\hg_z,\hg_z]=\RR X(z) 
\text{ for all }z\in\TT.$$
Therefore, if $z\in\TT$, then the subalgebra $\hg_z$ is subordinated to $\xi\in\gg^*$ if and only if $X(z)\in\Ker\xi$. 
\end{example}

\begin{remark}
\normalfont
Theorem~\ref{of} sheds fresh light on the topology of the dual space of a nilpotent Lie group~$G$. 
In fact, the coadjoint isotropy and the Vergne polarization with respect to a fixed Jordan-H\"older sequence 
define some maps $\gg^*\to\Gr_{\alg}(\gg)$, and the corresponding preimages of Schubert cells, 
when factorized through the coadjoint action, correspond to subquotients of the $C^*$-algebra of $G$ 
that have remarkable properties 
as for instance continuous trace or Morita equivalence to commutative $C^*$-algebras. 
See \cite{BBL17}
for more details.  
\end{remark}


\end{document}